\newtheorem{thm}{Theorem}
\newtheorem{crl}{Corollary}
\newtheorem{st}{Statement}
\newtheorem{cj}{Conjecture}
\newcounter{tdfn}
\newenvironment{dfn}
{\vspace{0.15cm}{\bf Definition \arabic{tdfn}.}} {\par
\addtocounter{tdfn}{1}}
\newcounter{trk}
\newenvironment{rk}
{\vspace{0.15cm}{\bf Remark \arabic{trk}.}} {\par
\addtocounter{trk}{1}}
 \def\Z{{\mathbb Z}}
 \def\0{{\mathbbf 0}}
 \def\1{{\mathbbf 1}}
 \def\G{{\cal G}}
\newcommand{\skcrro}{\raisebox{-0.25\height}{\includegraphics[width=0.5cm]{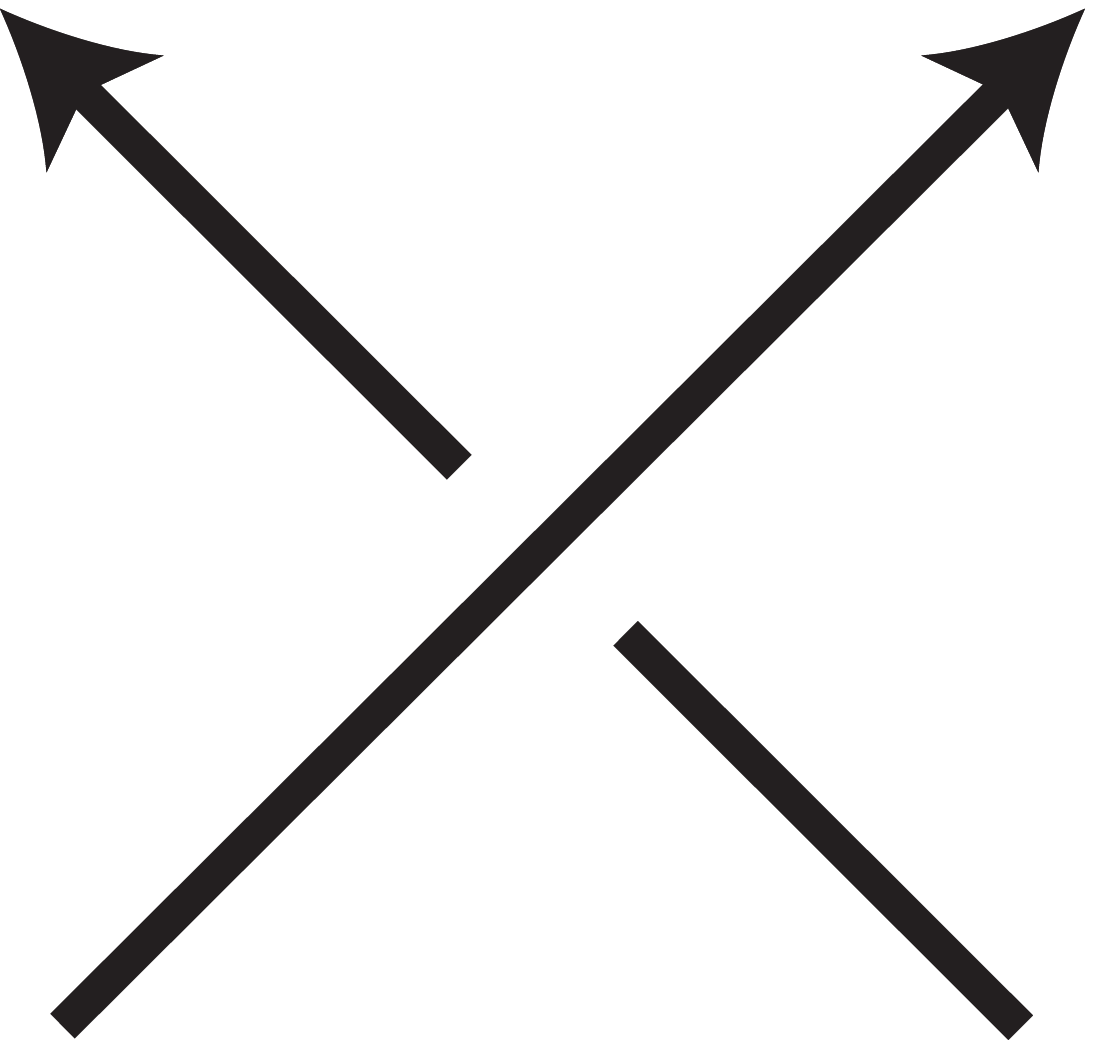}}}
\newcommand{\skcrlo}{\raisebox{-0.25\height}{\includegraphics[width=0.5cm]{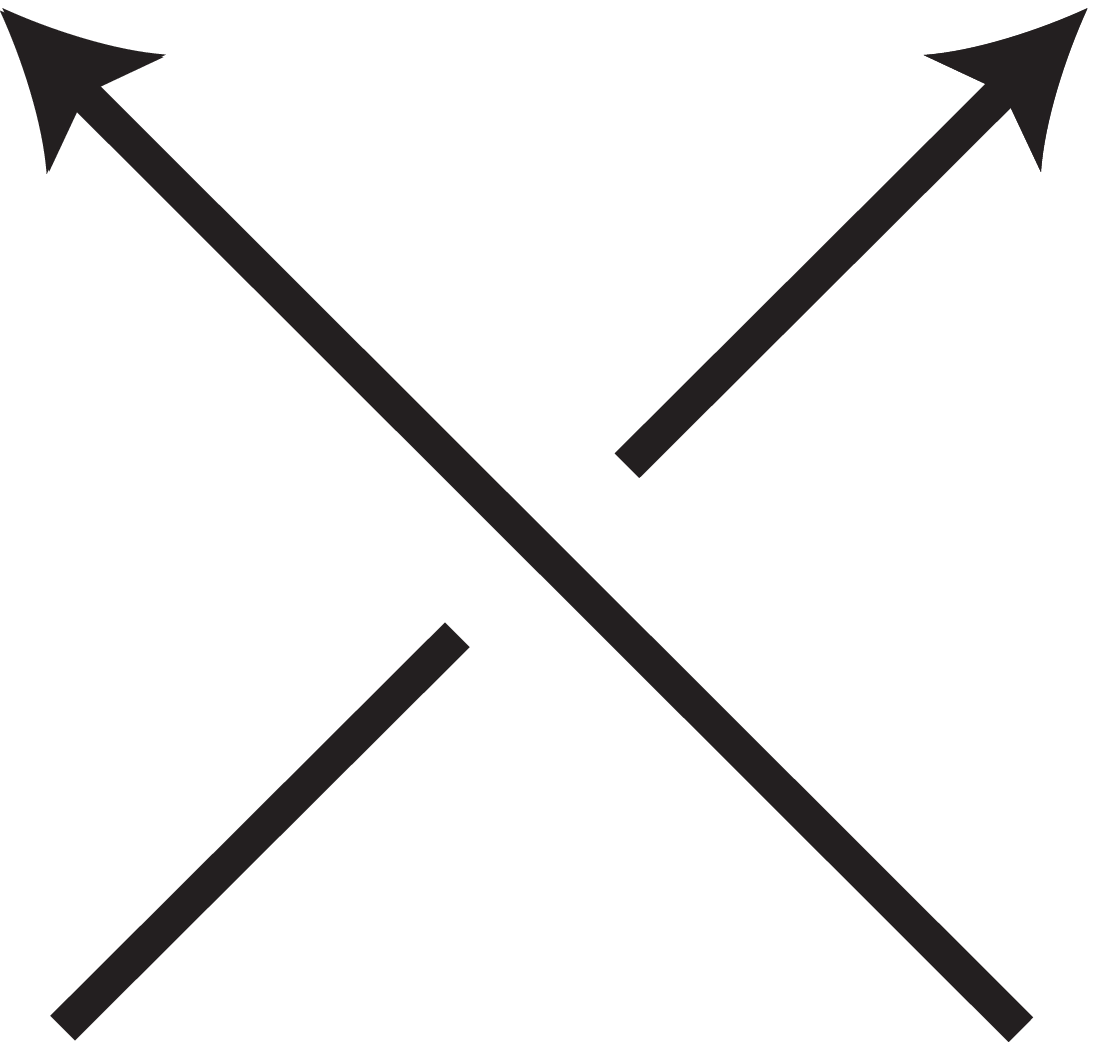}}}
\title{Parity and Projection from Virtual Knots to Classical Knots}
\author{Vassily Olegovich Manturov
\footnote{Peoples' Friendship University of Russia, Moscow 117198,
Ordjonikidze St., 3} \footnote{{\tt {vomanturov at yandex.ru}}}
\footnote{Partially supported by grants of the Russian Government
11.G34.31.0053, RF President NSh – 1410.2012.1, Ministry of
Education and Science of the Russian Federation 14.740.11.0794.} }
\begin{document}

\maketitle

\begin{abstract}
We construct various functorial maps (projections) from virtual
knots to classical knots. These maps are defined on diagrams of
virtual knots; in terms of Gauss diagram each of them can be
represented as a deletion of some chords. The construction relies
upon the notion of parity. As corollaries, we prove that the minimal
classical crossing number for classical knots.

Such projections can be useful for lifting invariants from classical
knots to virtual knots.

Different maps satisfy different properties.
\end{abstract}

MSC: 57M25, 57M27

Keywords: Knot, virtual knot, surface, group, projection, crossing,
crossing number, bridge number

\section{Introduction. Basic Notions}

Classical knot theory studies the embeddings of a circle (several
circles) to the plane up to isotopy in three-space. Virtual knot
theory studies the  embeddings of curves in thickened oriented
surfaces of arbitrary genus, up to the addition and removal of empty
handles from the surface. Virtual knots have a special diagrammatic
theory, described below, that makes handling them very similar to
the handling of classical knot diagrams. Many structures in
classical knot theory generalize to the virtual domain directly,
however, many other required more techniques \cite{MaIl};
nevertheless, many other structures (like Heegaard-Floer homology)
have not been generealized to virtual knots so far; the existence of
a well-defined projection from virtual knot theory to classical knot
theory may help solving such problems.

In the diagrammatic theory of virtual knots one adds a {\em virtual
crossing} (see Figure~\ref{Figure 1}) that is neither an
overcrossing nor an undercrossing.  A virtual crossing is
represented by two crossing segments with a small circle placed
around the crossing point. Figures \ref{Figure 1} and \ref{Figure 4}
are borrowed from \cite{KM2}.

Note that a classical knot vertex is a $4$-valent graphical node
embedded in the plane with extra structure. The extra structure
includes the diagrammatic choice of crossing (indicated by a broken
segment) and a specific choice of cyclic order (counterclockwise
when embedded in the plane) at the vertex. By a {\em framing} of a
four-valent graph we mean a splitting of the four emanating
(half)edges into two pairs of opposite (half)edges. The
counterclockwise cyclic order includes more information than just a
framing.
 A virtual knot is completely
specified by its $4$-valent nodes with their cyclic structure if the
edges incident to the nodes are labeled so that they can be
connected by arcs to form the corresponding graph.

Throughout the paper, all knots are assumed oriented. The results of
this paper are about virtual knots, as stated; nevertheless, after a
small effort they can be upgraded for the case of virtual links.

A {\em virtual diagram} is an immersion of a collection of circles
into the plane such that some crossings are structured as classical
crossings and some are simply labeled as virtual crossings and
indicated by a small circle drawn around the crossing. We regard the
resulting diagram as a possible non-planar graph whose only nodes
are the classical crossings, with their cyclic structure. Any
immersion of such a graph, preserving the cyclic structure at the
nodes, will represent the {\it same} virtual knot or link. From
this, we use the {\it detour move} (see below) for arcs with
consecutive virtual crossings, so that this equivalence is
satisfied. For the projection of the unknot (unlink) without
classical crossings we shall also admit a circle instead of graph;
thus, we category of graphs includes the circle.

Immersion of each particular circle from the collection gives rise
to a {\em component} of a virtual link diagram; virtual link
diagrams with one component are {\em virtual knot diagrams}; we
shall deal mostly with virtual knots and their diagrams, unless
specified otherwise; (virtual) {\em knots} are one-component
(virtual) links.

Moves on virtual diagrams generalize the Reidemeister moves
(together with obvious planar isotopy) for classical pieces of knot
and link diagrams (Figure~\ref{Figure 1}). One can summarize the
moves on virtual diagrams by saying that the classical crossings
interact with one another according to the usual Reidemeister moves
while virtual crossings are artifacts of the attempt to draw the
virtual structure in the plane. A segment of diagram consisting of a
sequence of consecutive virtual crossings can be excised and a new
connection made between the resulting free ends. If the new
connecting segment intersects the remaining diagram (transversally)
then each new intersection is taken to be virtual. Such an excision
and reconnection is called a {\it detour move}. Adding the global
detour move to the Reidemeister moves completes the description of
moves on virtual diagrams. In Figure~\ref{Figure 1} we illustrate a
set of local moves involving virtual crossings. The global detour
move is a consequence of  moves (B) and (C) in Figure~\ref{Figure
1}. The detour move is illustrated in Figure~\ref{Figure 2}. Virtual
knot and link diagrams that can be connected by a finite sequence of
these moves are said to be {\it equivalent} or {\it virtually
isotopic}. A virtual knot is an equivalence class of virtual
diagrams under these moves.

\begin{figure}[htb]
     \begin{center}
     \begin{tabular}{c}
     \includegraphics[width=10cm]{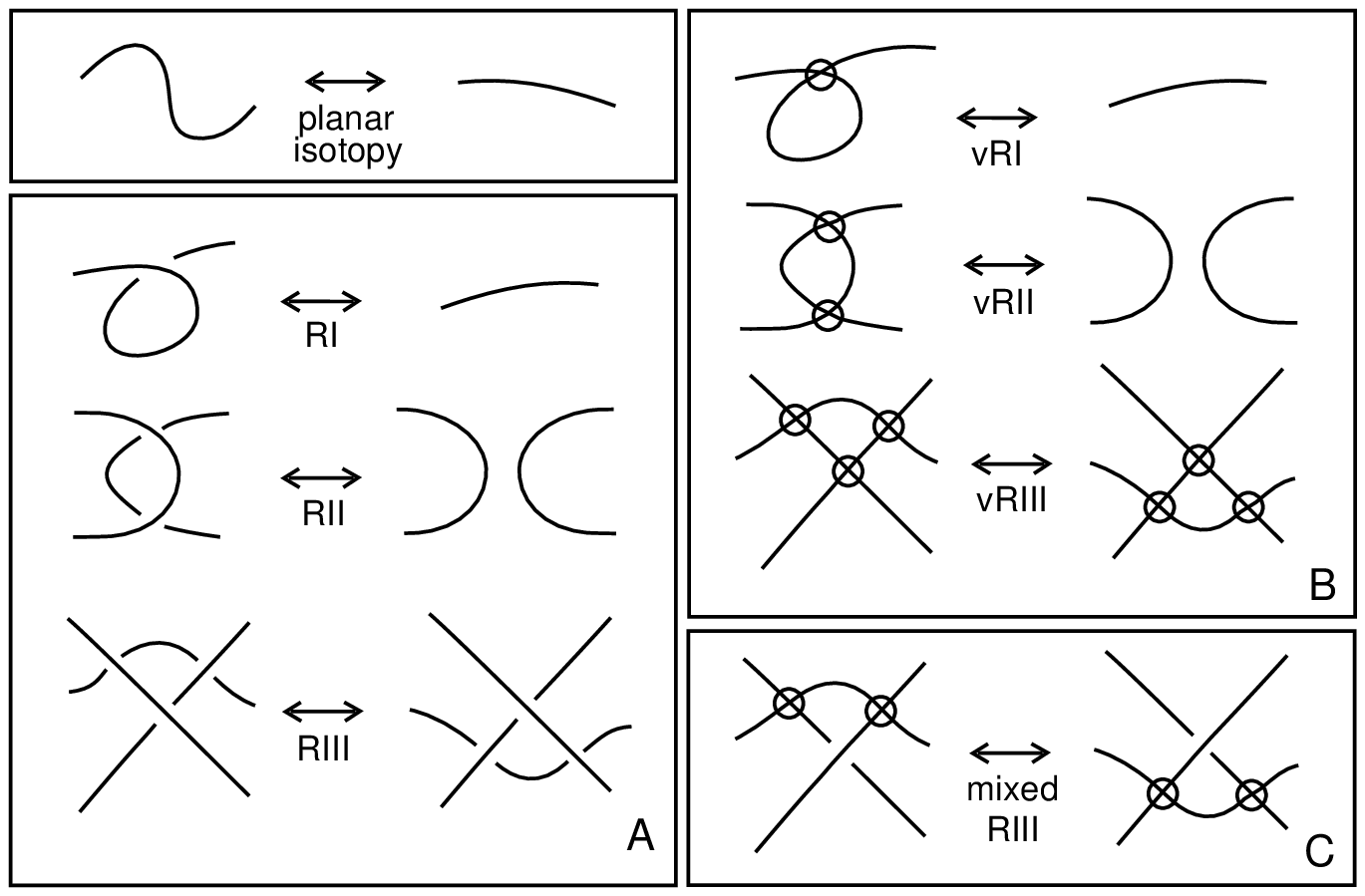}
     \end{tabular}
     \caption{\bf Moves}
     \label{Figure 1}
\end{center}
\end{figure}

\begin{figure}[htb]
     \begin{center}
     \begin{tabular}{c}
     \includegraphics[width=10cm]{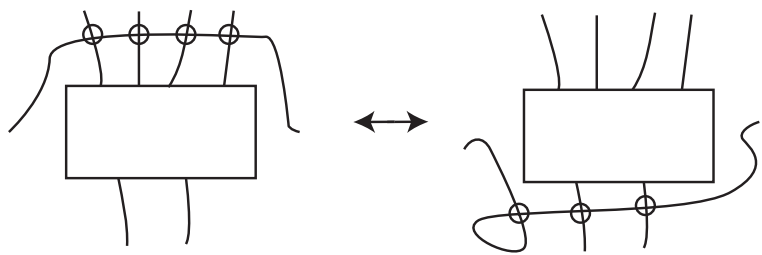}
     \end{tabular}
     \caption{\bf Detour Move}
     \label{Figure 2}
\end{center}
\end{figure}

Another way to understand virtual diagrams is to regard them as
representatives for oriented Gauss diagrams \cite{GPV}. The Gauss
diagram encodes the information about classical crossings of a knot
diagram and the way they are connected. However, not every Gauss
diagram has a planar realization.

An attempt to draw the corresponding diagram on the plane leads to
the production of the virtual crossings. Gauss  diagrams are most
convenient for knots, where there is one cycle in the code and one
circle in the Gauss diagram. One can work with Gauss diagrams for
links with a little bit more care, but we will not touch on this
subject.

The detour move makes the particular choice of virtual crossings
irrelevant.

{\it Virtual isotopy is the same as the equivalence relation
generated on the collection of oriented Gauss diagrams by abstract
Reidemeister moves on these codes.}

The paper is organized as follows. In the end of the introduction,
we present all necessary constructions of Gauss diagrams, band
presentation, and parity.

In Section 2, we formulate the main theorem (about projection) and
prove it modulo some important auxiliary theorems, one of them due
to I.M.Nikonov. We also prove two corollaries from the main theorem.

Section 3 is devoted to the proof of basic lemmas.

In Section 4, we introduce {\em parity groups} and discuss other
possibilities of constructing projection maps from virtual knots to
classical knots.

The paper is concluded by Section 5, where we discuss some obstacles
which do not allow us to define the projection uniquely on the
diagrammatic level.

\subsection{Acknowledgements}

I am grateful to L.H.Kauffman, I.M.Nikonov, V.V.Chernov, D.P.Ilyutko
for various fruitful discussions.

\subsection{Gauss diagrams}

\begin{dfn}
A {\em Gauss daigram} is a finite trivalent graph which consists of
just an oriented cycle passing through all vertices (this cycle is
called the {\em core} of the Gauss diagrams) and a collection of
oriented edges ({\em chords}) connecting crosssings to each other.
Besides the orientation, every chord is endowed with a sign.

Besides that we consider the {\em empty Gauss diagram} which is not
a graph, but an oriented circle; this empty Gauss diagram
corresponds to the unknot diagram without crossings.
\end{dfn}

Given a one-component virtual diagram $D$. Let us associate with it
the following Gauss diagram $\G(D)$. Let us represent the framed
four-valent graph $\Gamma$ of the diagram $D$ as the result of
pasting of a closed curve at some points (corresponding to classical
crossings) in such a way that the two parts of the neighbourhood of
a pasted point are mapped to {\em opposite} edges at the crossing.

Thus, we have a map $f:S^{1}\to \Gamma$. For the {\em core circle}
of the chord diagram we take
 $S^{1}$, vertices of the chord diagrams are preimages of vertices of  $\Gamma$,
 and chords connect those pairs of vertices having the same image.
The orientation of the circle corresponds to the orientation of the
knot. Besides, the chord
 is directed from the preimage of the overcrossing arc to the preimage of
an undercrossing arc; the sign of the chord is positive for
crossings of type $\skcrro$ and negative for crossings of type
$\skcrlo$.

We say that a Gauss diagram is {\em classical} if it can be
represented by a classical diagram (embedding of a four-valent graph
without virtual crossings). In Fig. 3, Reidemeister moves for Gauss
diagrams are drawn without indication of signs and arrows. For
Reidemeister-1 (the upper picture) move an addition/removal of a
solitary chord of any sign and with any arrow direction is possible.
For Reidemeister-2 move (two middle pictures), the chords $a$ and
$b$ should have the same orientation, but different signs.

The articulation for the third Reidemeister move (lowest picture) is
left for the reader as an exercise.

Note that two the Gauss diagram does not feel the detour move: if
two diagrams $K,K'$ are virtually isotopic, then $\G(K)=\G(K')$.

We say that a virtual knot diagram $K_{1}$ is {\em smaller} than the
diagram $K_{2}$, if the Gauss diagram of $K_{1}$ is obtained from
that of $K_{2}$ by a deletion of some chords.

For this, we take the notation $K_{1}<K_{2}$.

 As usual, we make no distinction
between virtually isotopic diagrams.

 This introduces a partial ordering on the set of
virtual knot diagrams. The unknot diagram without classical
crossings is smaller than any diagram with classical crossings.

\begin{figure}
     \begin{center}
     \begin{tabular}{c}
     \includegraphics[width=6cm]{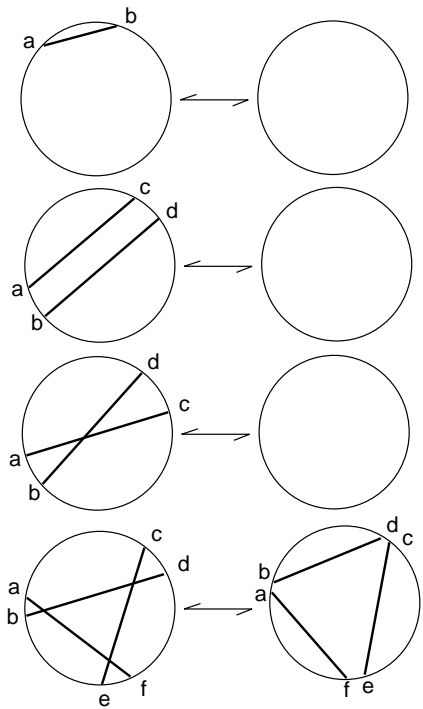}
     \end{tabular}
     \caption{\bf Reidemeister Moves on Chord Diagrams}
     \label{fig3}
\end{center}
\end{figure}

Having a Gauss diagram, one gets a collection of classical crossings
with an indication how they are connected to each other. So, a Gauss
diagram leads to a {\em virtual equivalence classes} of virtual knot
diagrams (note that Gauss diagram carries no information about
virtual crossings, so, virtually equivalent diagrams diagrams lead
to the same Gauss diagram).

By a {\em bridge} \cite{CSV} of a Gauss diagram we mean an arc of
the core circle between two adjacent arrowtails (for the edge
orientation for the chords of the chord diagram) containing
arrowheads only (possibly, none of them). In the corresponding
planar diagram, a {\em bridge} is a branch of the knot diagram from
an undercrossing to the next undercrossing containing overcrossings
and virtual crossings only. Thus, every virtual knot diagrams
naturally splits into bridges, see Fig. \ref{trefbrid}.

\begin{figure}
     \begin{center}
     \begin{tabular}{c}
     \includegraphics[width=6cm]{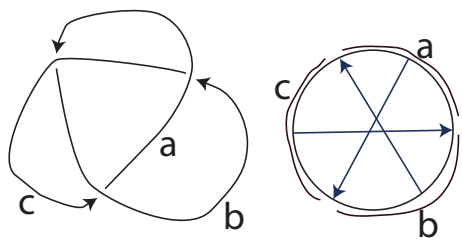}
     \end{tabular}
     \caption{\bf The Trefoil Knot and its Bridges}
     \label{trefbrid}
\end{center}
\end{figure}

 The {\em bridge number}
of a virtual knot diagram is the minimal number of its bridges.
Since the bridge number is defined in terms of Gauss diagram, it
does not change under detour moves.

With this, one can define the {\em minimal crossing number} and the
{\em bridge number} for virtual knots to be the minimum of crossing
numbers (resp., bridge numbers) over all virtual knot diagrams
representing the given knot. When we restrict to classical knots,
there we also have the definition when the minima are taken only
over classical diagrams.

So, for crossing number and bridge number for classical knots, we
have two definitions, the {\em classical one} and the {\em virtual
one}. As we shall see in the present paper (Corollaries \ref{crl1},
\ref{bridge}), these two definitions coincide, moreover, any virtual
diagram of a classical knot where the minimal classical crossing
number (resp., minimal bridge number) is obtained, is in fact,
virtually equivalent to a classical one.

\subsection{Band Presentation of Virtual Knots}

Note that knots in a thickened surface $S_{g}\times I$ are encoded
by regular projections on $S_{g}$ with over and undercrossings and
no virtual crossings. These diagrams are subject to classical
Reidemeister moves which look locally precisely as in the classical
case. No detour moves are needed since we have no virtual crossings
for such diagrams.

Let ${\cal K}$ be a (class of a) virtual knot, given by some virtual
diagram $K$. Let us describe the {\em band presentation} of this
knot as a knot in a thickened surface (following N.Kamada and
N.Kamada \cite{KK}).

We shall construct a surface $S(K)$ corresponding to the diagram
$K$, as follows. First, we construct a surface with boundary
corresponding to $K$.

With every classical crossing, we associate a ``cross'' (upper
picture in Fig. \ref{cr}), and with every virtual crossing, we
associate a pair of ``skew'' bands (lower part of Fig. \ref{cr}).

Connecting these crosses and bands by non-intersecting and
non-twisted bands going along the edges of the diagram, we get an
oriented $2$-manifold with boundary, to be denoted by $S'(K)$ (the
orientation is taken from the plane), see Fig. \ref{Figure 4}.

\begin{figure}
\begin{center}
\begin{picture}(100,160)
\thicklines \put(5,95){\line(1,1){50}} \put(55,95){\line(-1,1){50}}
\put(50,120){$\longrightarrow$} \put(50,40){$\longrightarrow$}
\put(5,5){\line(1,1){50}} \put(55,5){\line(-1,1){50}}
\put(30,30){\circle{5}} \thinlines \put(65,100){\line(1,1){20}}
\put(85,120){\line(-1,1){20}} \put(70,95){\line(1,1){20}}
\put(90,115){\line(1,-1){20}} \put(115,100){\line(-1,1){20}}
\put(95,120){\line(1,1){20}} \put(110,145){\line(-1,-1){20}}
\put(90,125){\line(-1,1){20}} \put(65,10){\line(1,1){50}}
\put(70,5){\line(1,1){50}} \put(115,5){\line(-1,1){20}}
\put(120,10){\line(-1,1){20}} \put(65,55){\line(1,-1){20}}
\put(70,60){\line(1,-1){20}}
\end{picture}
\end{center}
\vspace{-0.5cm} \caption{Local Structure of $M'$} \label{cr}
\end{figure}
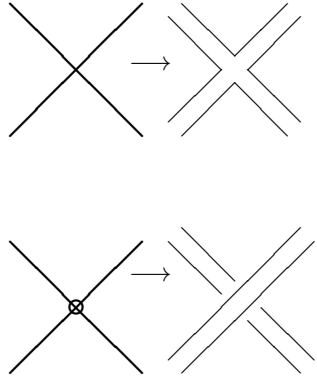

The diagram  $K$ can be drawn on the surface  $S'(K)$ in a natural
way so that the arcs of the diagram (which may pass through virtual
crossings) are located in such a way that the arcs are go along the
middle lines of the band, and classical (flat) crossings correspond
to intersection of middle lines inside crossings. Thus we get  curve
$\delta\subset S'(K)$ (for a link we would get a set of curves).
Pasting the boundary components of the manifold $S'(K)$ by discs, we
get an oriented manifold $S=S(K)$ without boundary with a curve
$\delta$ in it; we call the surface $S(K)$  {\em the underlying
surface for the diagram $K$}. We call the genus of this surface the
{\em underlying diagram genus} of the diagram $K$.

We call the connected components of the boundary of $S'(K)$ the {\em
pasted cycles} or the {\em rotating cycles}. Originally rotating
cycles are defined by using source-sink orientation of $K$, but in
this paper we regard them as the boundary of the oriented surface
$S'(D)$ since we handle diagrams which do or do not admit a
source-sink orientation. These pasted cycles treated as collections
of vertices, will be used in the sequel for constructing parity
groups.

By the {\em underlying genus} of a virtual knot we mean the minimum
of all underlying genera over all diagrams of this knot.

We say that a diagram $K$ is a {\em minimal genus diagram} if the
genus of the diagram coincides with the genus of the corresponding
knot.

As we shall see, some minimal characteristics of virtual knots can
be realized only on minimal genus diagrams.

The detour move does not change the band presentation of the knot at
all. As for Reidemeister move, the first and the third moves do not
change the genus of the knot, whence the second
increasing/decreasing move may increase/decrease the genus of the
underlying surface (cause stabilization/destabilization).

To define handle stabilization, regard the knot or link as
represented by a diagram $D$ on a surface $S.$  If $C$ is an
embedded curve in $S$ that does not intersect the diagram $D$ and
cutting along $D$ does not disconnect the surface, then we cut along
$C$ and add two disks to fill in the boundary of the cut surface.
This is a handle destabilization move that reduces the genus of the
surface to a surface $S'$ containing a new diagram $D'.$ The pairs
$(S,D)$ and $(S',D')$ represent the same virtual knot or link. The
reverse operation that takes $(S',D')$  to  $(S,D)$ consists in
choosing two disks in $S'$ that are disjoint from $D'$, cutting them
out and joining their boundaries by a tube (hence the term handle
addition for this direction of stabilization).

\begin{figure}
     \begin{center}
     \begin{tabular}{c}
     \includegraphics[width=10cm]{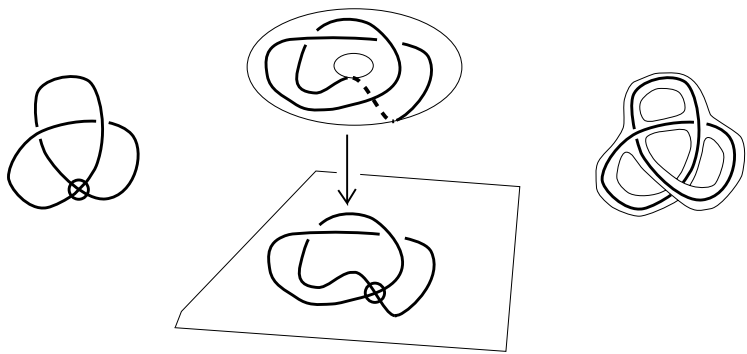}
     \end{tabular}
     \caption{ Surfaces and Virtual Knots}
     \label{Figure 4}
\end{center}
\end{figure}

We say that two such surface embeddings are {\em stably equivalent}
if one can be obtained from another by isotopy in the thickened
surfaces, homeomorphisms of the surfaces and handle stabilization.

\begin{thm}[\cite{KK,KaV}]
The above description of a band representation leads to a bijection
between virtual knots and stably equivalent classes of embeddings of
circles in thickened surfaces.
\end{thm}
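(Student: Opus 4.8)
The plan is to exhibit the band presentation as one half of a pair of mutually inverse maps between the set of virtual knots and the set of stable equivalence classes of circles embedded in thickened surfaces, and to check separately that each map respects the two equivalence relations in play. Write $\Phi$ for the map sending a virtual diagram $K$ to the pair $(S(K),\delta)$ produced above, and let $\Psi$ go the other way by projecting an embedded curve to its supporting surface and recording the resulting diagrammatic data. Since virtual isotopy is exactly the equivalence generated by abstract Reidemeister moves on Gauss diagrams, it is convenient to let both maps factor through the Gauss diagram $\G(K)$ together with the cyclic (framing) information carried by the bands; this reduces the whole statement to matching abstract Reidemeister moves on one side with isotopy-plus-stabilization on the other.

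First I would verify that $\Phi$ descends to virtual isotopy classes. The detour move changes neither $S'(K)$ nor $S(K)$, as already noted, so it is enough to treat the three Reidemeister moves locally: each is supported in a disk, and outside this disk the surface and the curve are unchanged. For the first and third moves the local model does not alter the genus, and the corresponding modification of $\delta$ is an isotopy of the embedded curve inside the thickened surface together with a homeomorphism of the surface; for the second move the local model may create or destroy a pair of complementary boundary circles, which after pasting in disks is precisely a handle stabilization or destabilization carried out away from the rest of the diagram. Thus every generating move on $K$ induces an isotopy, a surface homeomorphism, or a handle (de)stabilization, all of which are permitted in stable equivalence, so $\Phi$ is well defined.

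Next I would build $\Psi$ and check surjectivity of $\Phi$. Given an embedding of a circle in $F\times I$, a generic projection to $F$ yields a knot diagram $D$ on $F$ with classical crossings only. Take a regular neighbourhood $N(D)$ of the underlying four-valent graph in $F$; this is a ribbon surface that can be immersed into the plane so that its bands cross transversally, and declaring each band crossing to be virtual produces a virtual diagram $K$ with $S'(K)\cong N(D)$. Filling boundary circles with disks, $S(K)$ is obtained from $F$ by discarding the complementary regions of $N(D)$ and capping off; since this differs from $F$ only by handles disjoint from $D$, the pair $(S(K),\delta)$ is stably equivalent to the original embedding. Hence $\Phi(\Psi(F,D))=[(F,D)]$ and $\Phi$ is onto.

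The remaining and hardest point is injectivity, equivalently that $\Psi$ is well defined on stable equivalence classes and that $\Psi\circ\Phi$ is the identity on virtual knots. The second assertion is immediate once one observes that reconstructing a virtual diagram from $S(K)$ returns a diagram with the same Gauss diagram as $K$, hence one virtually isotopic to $K$. For the first, I would analyse each generator of stable equivalence: an isotopy of the curve in the thickened surface projects to a finite sequence of Reidemeister moves on $D\subset F$, each matching a virtual Reidemeister move on $K$; a surface homeomorphism only changes the chosen planar realization of the ribbon graph, and two planar realizations of the same ribbon structure differ by detour moves; and a handle (de)stabilization performed away from $D$ leaves the Gauss data untouched. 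I expect the main obstacle to be exactly this last step together with the matching of surface moves to moves on Gauss diagrams: one must first isotope any stabilizing handle so that it is genuinely disjoint from $\delta$ before deleting it, and one must verify that the ambiguity in the generic projection and in the planar embedding of $N(D)$ never produces anything beyond abstract Reidemeister and detour moves. Once these local correspondences are established, $\Psi$ descends to stable equivalence classes and is a two-sided inverse of $\Phi$, proving the bijection.
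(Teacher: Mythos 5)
The paper contains no proof of this statement to compare against: it is imported as a known theorem with citations to \cite{KK,KaV}, and the text never argues it internally. So your proposal has to be judged against the standard proofs in those references, and in outline it is essentially a reconstruction of them (the ``abstract link diagram'' argument of Kamada--Kamada): the forward map $\Phi$ is well defined because detour moves do not change $S'(K)$ and the three Reidemeister moves induce isotopy, homeomorphism, or (for the second move) a stabilization; the backward map $\Psi$ comes from a regular neighbourhood of the projected diagram immersed in the plane with band crossings declared virtual; and the bijection reduces to matching each generator of stable equivalence with virtual moves. This is the right structure, and your accounting of why capping $N(D)$ versus keeping the complementary regions of $F$ only costs destabilizations is correct.

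The caveat is that, as written, your argument delegates rather than proves its two load-bearing steps, which you yourself flag as the ``main obstacles'': (i) the Reidemeister-type theorem for knots in thickened surfaces, i.e.\ that an ambient isotopy of $\delta$ in $F\times I$ projects to a finite sequence of Reidemeister moves of the diagram on $F$; and (ii) the lemma that any two planar immersions of the same band surface (ribbon structure with its curve) yield virtual diagrams related by detour moves alone. These are precisely the nontrivial content of \cite{KK,KaV}; stating them is not the same as proving them, and (ii) in particular is where the whole identification of ``virtual crossings as artifacts'' lives. With those two lemmas either cited or proved, your plan closes into a complete and correct proof; without them it remains a faithful but incomplete skeleton of the standard argument.
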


So, we shall deal with the following two equivalences: the usual one
(with (de)sta\-bi\-li\-sa\-tion) and the equivalence without
(de)stabilisation which preserves the genus of the underlying
surface.

The  Kuperberg Theorem says that virtual knots can be studied by
using their minimal representatives. More precisely, we have

\begin{thm}[Kuperberg's Theorem,\cite{Kup}]
A minimal genus diagram of a virtual knot ${\cal K}$ is unique up to
isotopy; in other words, if two diagrams $K_{1},K_{2}$ are of the
minimal genus then there is a sequence of Reidemeister moves from
$K_{1}$ to $K_{2}$ such that all intermediate diagrams between
$K_{1}$ and $K_{2}$ are of the same genus.
\end{thm}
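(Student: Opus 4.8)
The plan is to argue on the side of the band presentation, using the already-cited bijection between virtual knots and stable equivalence classes of embeddings of circles in thickened surfaces. Thus I regard ${\cal K}$ as represented by a pair $(\Sigma, L)$, where $L$ is an embedded circle in $\Sigma \times I$, and the allowed moves are ambient isotopy in $\Sigma \times I$, orientation-preserving homeomorphisms of $\Sigma$, and handle (de)stabilization. Call a representative \emph{irreducible} if it admits no destabilization, i.e.\ there is no embedded curve $C \subset \Sigma$ disjoint from the projection of $L$ whose cutting-and-capping yields a valid genus-reducing move. Since the genus is a nonnegative integer and each destabilization strictly lowers it, every virtual knot has an irreducible representative, and \emph{irreducible} coincides with \emph{minimal genus}. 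This settles existence, so it remains to prove uniqueness.

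For uniqueness, suppose $(\Sigma_1, L_1)$ and $(\Sigma_2, L_2)$ are two minimal (irreducible) representatives of the same ${\cal K}$. By the band presentation theorem they are stably equivalent, hence joined by a finite sequence of stabilizations, destabilizations, isotopies and homeomorphisms. The goal is to replace this sequence by one that never changes the genus; the conclusion that $\Sigma_1$ and $\Sigma_2$ are homeomorphic, together with the Reidemeister-move reformulation, then follows, because isotopy of $L$ inside a fixed $\Sigma \times I$ is generated precisely by the genus-preserving Reidemeister moves of Figure~\ref{Figure 1} performed on $\Sigma$. The strategy is a descent on the number of genus changes: I would show that any stabilization occurring earlier in the sequence than a destabilization can be either cancelled against it or commuted past it. Granting this, all stabilizations can be pushed to the front and all destabilizations to the back, so the genus profile rises and then falls; since both endpoints are already minimal, the profile must be constant, forcing every intermediate move to preserve genus.

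The crux, and the expected main obstacle, is the commutation/cancellation lemma for a stabilizing handle against a destabilizing curve. Here the essential input is that $\Sigma \times I$ is a product $I$-bundle, so the relevant destabilizing annuli are, after isotopy, vertical, and the compressing curves can be taken to lie in a single copy of $\Sigma$. I would prove the lemma by an innermost-circle and disk-swapping argument: given the cocore circle $C_0$ of the added handle and an arbitrary destabilizing curve $C$, one isotopes $C$ relative to $L$ so as to minimize $|C \cap C_0|$, then uses the Jordan curve theorem on $\Sigma$ together with the irreducibility of $\Sigma \times I$ along product annuli to remove all intersections, making $C$ and $C_0$ disjoint. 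Two disjoint destabilizations commute, while a destabilization along the belt circle of a just-created handle cancels the corresponding stabilization. Carrying out this innermost-circle bookkeeping carefully, keeping $L$ fixed throughout and checking that no intersection removal is obstructed by $L$, is the technical heart of the argument; everything else reduces to the formal descent described above.
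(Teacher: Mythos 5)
Note first that the paper does not prove this statement at all: it is quoted as Kuperberg's theorem and cited to \cite{Kup}, so the only thing to compare your attempt with is Kuperberg's published argument, whose skeleton (termination plus a confluence/commutation lemma for destabilizations, obtained by making destabilizing annuli disjoint) is exactly what you are trying to reconstruct. The skeleton is right, but as written your proof has two genuine gaps. The first is the descent bookkeeping, which is stated backwards in a way that breaks the deduction. Your commutation lemma replaces ``stabilization $\dots$ destabilization'' by ``destabilization $\dots$ stabilization'', so iterating it pushes destabilizations to the \emph{front} and stabilizations to the \emph{back}, producing a genus profile that falls and then rises; since every intermediate pair represents the same virtual knot, its genus can never drop below the minimal genus, so a fall-then-rise profile that starts and ends at the minimum is forced to be constant. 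What you actually wrote is the opposite: stabilizations at the front and a rise-then-fall profile, and from that the conclusion does \emph{not} follow --- a ``mountain'' profile with minimal endpoints is perfectly possible (stabilize once, then destabilize: genus $g\to g+1\to g$). So either you prove the opposite commutation (destabilization before a later stabilization can be pushed earlier), or you keep your lemma and correct the stated consequence to the valley-shaped rearrangement; as it stands the final step is invalid.

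The second and more serious gap is in the crux lemma itself. You propose to minimize $|C\cap C_0|$ on the surface, ``keeping $L$ fixed throughout.'' But $L$ is not fixed: the cocore $C_0$ of the new handle is disjoint from the link in its position immediately after the stabilization, while the destabilizing curve $C$ is disjoint from the link only after the intermediate isotopy, which may drag strands of the link over the new handle and across $C_0$. There is no single position of the link with respect to which both $C$ and $C_0$ are destabilizing curves, so an innermost-circle argument on $\Sigma$ relative to $L$ never gets started; this intermediate isotopy is precisely the difficulty, and assuming it away assumes away the theorem. The correct setting is three-dimensional: transport the vertical annulus $C_0\times I$ by the ambient isotopy to obtain an essential spanning annulus $A_0$ in $(\Sigma'\times I)\setminus L'$ which is in general no longer vertical, and then prove that two essential spanning annuli in the link complement can be isotoped rel $L'$ to be disjoint or parallel (innermost circles and arcs, using irreducibility and boundary-irreducibility of the complement; note that irreducibility fails when the link lies inside a ball, i.e.\ in the classical case, which needs separate treatment). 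Disjointness then yields your commutation and parallelism yields the cancellation case. One last small point: your claim that ``irreducible coincides with minimal genus'' is a consequence of the theorem being proved, not an available hypothesis; you do not actually use it later, but it should not be asserted at the outset.
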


\subsection{Parity}

\begin{dfn}
Let ${\cal L}$ be a knot theory, i.e., a theory whose objects are
encoded by diagrams (four-valent framed graphs, possibly, with
further decorations) modulo the three Reidemeister moves (and the
detour move) applied to crossings. For every Reidemeister move
transforming a diagram $K$ to a diagram $K_{1}$ there are
corresponding crossings: those crossings outside the domain of the
Reidemeister move for $K$ are in one-to-one correspondence with
those crossings outside the domain of the Reidemeister move for
$K_{1}$. Besides, for every third Reidemeister move $K\to K_{1}$
there is a natural correspondence between crossings of $K$ taking
part in this move and the resulting crossings of $K_{1}$. By a {\em
parity} for the knot theory ${\cal L}$ we mean a rule for
associating $0$ or $1$ with every (classical) crossing of any
diagram $K$ from the theory ${\cal L}$ in a way such that:

\begin{enumerate}

\item For every Reidemeister moves $K\to K_{1}$ the corresponding crossings have the same parity;

\item For each of the three Reidemeister moves the sum of parities of crossings taking part
in this move is zero modulo two.

\end{enumerate}

\end{dfn}

\begin{dfn} Now, a {\em parity in a weak sense} is defined in the same way as parity
but with the second condition relaxed for the case of the third
Reidemeister move. We allow three crossings taking part in the third
Reidemeister move to be all odd (so for the third Reidemeister move
the only forbidden case is when exactly one of three crossings is
odd).
\end{dfn}

We shall deal with parities for {\em virtual knots} or for {\em
knots in a given thickened surface}. In the latter case diagrams are
drawn on a $2$-surface and Reidemeister moves are applied to these
diagrams; no ``stabilizing'' Reidemeister moves changing the genus
of the surface are allowed.

We say that two chords of a Gauss diagram $a,b$ are {\em linked} if
two ends of one chord $a$ belong to different connected components
of the complement to the endpoints of $b$ in the core circle of the
Gauss diagram (it is assumed that no chord is linked with itself).
We say that a chord of a Gauss diagram is {\em even} (with respect
to the {\em Gaussian parity}) if it is linked with evenly many
chords; otherwise we say that this chord is {\em odd} (with respect
to the {\em Gaussian parity}). We shall say that a classical
crossing of a virtual knot diagram is even whenever the
corresponding chord is even. One can easily check the parity axioms
for the Gaussian parity.

For every parity $p$ for virtual knots (or knots in a specific
thickened surface), consider a mapping $pr_{p}:{\cal G}\to {\cal G}$
from the set of Gauss diagrams ${\cal G}$ to itself, defined as
follows. For every virtual knot diagram $K$ represented by a Gauss
diagram ${\cal G}(K)$ we take $pr_{p}(K)$ to be the virtual knot
diagram represented by the Gauss diagram obtained from ${\cal G}(K)$
by deleting odd chords with respect to $p$. At the level of planar
diagrams this means that we replace odd crossings by virtual
crossings.

The following theorem follows from definitions,
see,e.g.,\cite{Sbornik1}.
\begin{thm}
The mapping $pr_{p}$ is well defined, i.e., if $K$ and $K'$ are
equivalent, then so are $pr_{p}(K)$ and $pr_{p}(K')$.

The same is true for every parity in a weak sense as discussed
above. \label{gsthm}
\end{thm}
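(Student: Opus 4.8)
The plan is to verify the invariance of $pr_p$ move by move, working entirely at the level of Gauss diagrams. Since $pr_p$ is defined purely as a deletion of odd chords from $\G(K)$, and the parity of a chord is, by the parity axioms, a well-defined function of the crossing, the map is manifestly unaffected by detour moves: these do not alter $\G(K)$ at all, so $\G(pr_p(K))=\G(pr_p(K'))$ whenever $K,K'$ differ by detours. Hence it suffices to show that if $K_1$ is obtained from $K$ by a single Reidemeister move, then $pr_p(K)$ and $pr_p(K_1)$ are virtually isotopic. Throughout I would use the first parity axiom (corresponding crossings outside the move, and, for the third move, the three participating crossings under the natural correspondence, keep their parities) to conclude that the surviving chords away from the move agree on both sides; the argument then reduces to the local picture of the move.

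First I would treat $R1$ and $R2$. By the second parity axiom the parities of the crossings taking part in the move sum to zero modulo two. For $R1$ there is a single crossing, so it is necessarily even and therefore survives $pr_p$; consequently $pr_p(K)$ and $pr_p(K_1)$ differ by exactly this even kink, i.e. by one $R1$ move. For $R2$ the two participating chords have equal parity. If both are even they survive; and since the two feet of each were created inside a bigon whose two sides carry no other feet, a property preserved under deleting chords elsewhere and under which the signs and orientations of the surviving pair are unchanged, they still form a cancelling $R2$ pair in $pr_p(K_1)$, so $pr_p(K_1)\to pr_p(K)$ by one $R2$ move. If both are odd they are deleted on the $K_1$ side and absent on the $K$ side, so the two Gauss diagrams coincide. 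In either case the diagrams are equivalent.

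The essential case is $R3$, and this is where I expect the real content to sit. The $R3$ move on a Gauss diagram only reverses the cyclic order of three consecutive feet, one belonging to each of the three participating chords $a,b,c$, while fixing every other foot and every chord connection. By the first axiom these three chords keep their parities under the move, and by the second axiom (strong parity) the number of odd chords among $a,b,c$ is even, i.e. $0$ or $2$; for a parity in the weak sense the forbidden configuration is \emph{exactly one} odd chord, so the admissible counts are $0$, $2$ or $3$. If none of $a,b,c$ is odd, all three survive, the permuted triple of feet is still consecutive after deleting the odd chords elsewhere, and $pr_p(K)\to pr_p(K_1)$ is again an $R3$ move. If two of them are odd, deleting those two removes all but one foot from the permuted triple, and reversing the order of a single foot is the identity; if all three are odd (weak case), the whole triple of feet is deleted. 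In both of these cases the local reordering becomes invisible, so $\G(pr_p(K))=\G(pr_p(K_1))$ and the two diagrams are literally the same.

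This last observation also explains, and I would stress it, why the weak-parity axiom must forbid precisely the configuration with exactly one odd chord: deleting a single chord of the triple would leave two feet whose cyclic order genuinely differs between $K$ and $K_1$, producing inequivalent Gauss diagrams. The main obstacle is thus not the bookkeeping of $R1$ and $R2$ but the verification that in the $R3$ case every admissible parity count reduces either to a legitimate $R3$ move or to an \emph{equality} of Gauss diagrams once the few remaining feet are too sparse for the reordering to matter; collecting the four moves then completes the proof for both parity and parity in the weak sense.
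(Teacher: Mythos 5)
Your proof is correct, and it is precisely the standard move-by-move verification that the paper itself omits: for Theorem \ref{gsthm} the paper offers no argument beyond stating that it ``follows from definitions'' with a citation to \cite{Sbornik1}, and your case analysis on the number of odd crossings participating in each Reidemeister move --- with the exactly-one-odd configuration in the third move being precisely what both the strong and weak parity axioms forbid --- is the intended content of that remark. One minor correction: the third Reidemeister move on a Gauss diagram does not permute a single triple of three consecutive feet, one per chord, as you describe; it transposes two adjacent feet on each of three arcs of the core circle (six feet in all, since every one of the three chords has \emph{both} of its endpoints in the move region), but your conclusions survive this correction unchanged, because a transposition in which at least one of the two feet belongs to a deleted chord is invisible in the projected diagram, so the two-odd and three-odd cases still yield equal Gauss diagrams and the zero-odd case still yields a genuine third Reidemeister move.
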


Thus, for the Gaussian parity $g$ one has a well-defined projection
$pr_{g}$. Note that if $K$ is a virtual knot diagram, then
$pr_{g}(K)$ might have odd chords: indeed, some crossings which were
even in $K$ may become odd in $pr_{g}(K)$.

However, this map $pr_{g}$ may take diagrams from one theory to
another; for example, if we consider equivalent knots lying in a
given thickened surface, their images should not necessarily be
realised in the same surface; they will just be equivalent virtual
knots. For virtual knots, this is just a map from virtual knots to
virtual knots.

Note that $pr_{g}$ is not an idempotent map. For example, if we take
the Gauss diagram with four chords $a,b,c,d$ where $a$ is linked
with $b,c$, the chord $b$ is linked with $a,d$, the chord $c$ is
linked with $a$, and the chord $d$ is linked with $b$, then after
applying $pr_{g}$, we shall get a diagram with two chords $a,b$, and
they will both become odd, see Fig. \ref{notidemp}.

\begin{figure}
     \begin{center}
     \begin{tabular}{c}
     \includegraphics[width=7cm]{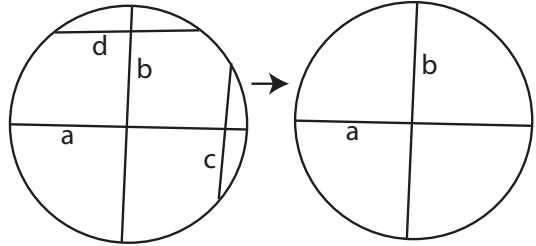}
     \end{tabular}
     \caption{The parity projection is not idempotent}
     \label{notidemp}
\end{center}
\end{figure}

Now, let $S_{g}$ be a surface of genus $g$. Fix a cohomology class
$\alpha\in H^{1}(S_{g},\Z_{2})$. Let us consider those knots $K$ in
$S_{g}$ for which the total homology class of the knot $K$ in
$H_{1}(S_{g},\Z_{2})$ is trivial.

With every crossing $v$ of $K$ we associate the two {\em halves}
$h_{v,1},h_{v,2}$ (elements of the fundamental group
$\pi_{1}(S_{g},v)$) as follows. Let us smooth the diagram $K$ at $v$
according to the orientation of $K$. Thus, we get a two-component
oriented link. If $\alpha(h_{v,1})=\alpha(h_{v,2})=0$ we say that
the crossing $v$ is {\em even}; otherwise we say that it is {\em
odd}.

In \cite{IMN} it is proved that this leads to a well-defined parity
for knots in $S_{g}\times I$. Thus, every $\Z_{2}$-cohomology class
of the surface which evaluates trivially on the knot itself, gives
rise to a well-defined parity. We shall call it the {\em homological
parity}.

\section{Statements of Main Results}

For every Gauss diagram one can decree some chords (crossings) to be
{\em true classical} (in an ambiguous way, see discussion in the
last section) and remove the other ones, so that  the resulting
Gauss diagram classical, and this map will give rise to a
well-defined projection from virtual knots to classical knots. In
Fig. \ref{vkclasproj}, a virtual knot $A$ is drawn in the left part;
its band presentation belongs to the thickened torus (see upper part
of the right picture); there are four ``homologically non-trivial''
crossings disappear which leads to the diagram $D$ (virtually
isotopic to the one depicted in the lower picture of the right
half). This is the classical trefoil knot diagram.

The aim of the present article is the proof of the following

\begin{thm}
For every virtual diagram $K$ there exists a classical diagram
${\bar K}$, such that:

\begin{enumerate}

\item ${\bar K}<K$;

\item ${\bar K}=K$ if and only if $K$ is classical.

\item If $K_{1}$ and $K_{2}$ are equivalent virtual knots, then so do ${\bar K_{1}}$ and ${\bar K_{2}}$.

\item The map restricted to non-classical knots is a surjection onto
the set of all classical knots.

\end{enumerate}
\label{mainthm}
\end{thm}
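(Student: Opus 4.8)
The plan is to realize $\bar K$ as the result of deleting, from the Gauss diagram $\G(K)$, exactly those chords whose crossings are ``homologically essential'' on the band surface $S(K)$, and then to verify the four properties one at a time. Concretely, I would fix the underlying surface $S(K)=S_g$ and, running over all cohomology classes $\alpha\in H^{1}(S_{g},\Z_{2})$ that annihilate the class of the knot, declare a crossing $v$ to be deleted if it is \emph{odd} with respect to the homological parity $p_{\alpha}$ of \cite{IMN} for at least one such $\alpha$. Equivalently, I keep only the crossings whose two halves $h_{v,1},h_{v,2}$ are null-homologous for every admissible $\alpha$, i.e.\ the crossings that do not detect any handle of $S_{g}$. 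Note that a purely combinatorial iteration of the Gaussian projection $pr_{g}$ cannot work here: its fixed points are all-even diagrams, and an all-even Gauss diagram need not be realizable, so a genuinely topological (surface-theoretic) input is unavoidable.

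The step I expect to be the main obstacle, and where I would invoke the auxiliary theorem attributed to I.M.~Nikonov, is the claim that after this deletion the underlying genus actually drops to $0$, so that $\bar K$ is classical. Intuitively the deleted crossings carry precisely the homological content of the diagram, and cutting $S_{g}$ along the curves dual to a basis of $H^{1}(S_{g},\Z_{2})$ and capping off with discs collapses $S_{g}$ to a sphere on which the surviving arcs embed without virtual crossings. Turning this intuition into a proof requires matching chord deletion against handle destabilization and checking that no residual obstruction to realizability survives; this is exactly the point at which the soft parity axioms are insufficient and a surface-cutting lemma is needed. I would treat this lemma as the core black box supplied by Section~3.

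Granting the genus-drop lemma, the first three properties follow. For property~1, the construction only removes chords of $\G(K)$, and since detour moves do not affect Gauss diagrams, $\G(\bar K)$ is obtained from $\G(K)$ by deletion, so $\bar K<K$ by definition of the partial order. For property~2, if $K$ is already classical then $g=0$, no half is homologically nontrivial, and $\bar K=K$; conversely, if $\bar K=K$ then nothing was deleted, every crossing has null-homologous halves, and the genus-drop lemma forces $g=0$, so $K$ is classical. For property~3 (functoriality) I would combine the well-definedness of each $pr_{p_{\alpha}}$ given by Theorem~\ref{gsthm} with Kuperberg's theorem: equivalent knots have minimal-genus diagrams connected by genus-preserving Reidemeister moves, each homological parity is a genuine parity so deletion commutes with those moves, and the genus-increasing second moves add $R2$-paired crossings that are either both essential (deleted, leaving $\bar K$ unchanged) or both inessential (and removable by the same $R2$ in $\bar K$); independence of the chosen $\alpha$'s is automatic since the deleted set depends only on the annihilator subspace, not on a basis.

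Finally, for property~4 I would argue surjectivity by an explicit construction rather than an abstract count: given an arbitrary classical knot $L$, take a classical diagram of $L$ and form a virtual connected sum $L\#V$ with a fixed non-classical virtual knot $V$ all of whose crossings are homologically essential on $S(V)$. Since $S(L)$ has genus $0$, one has $S(L\#V)\cong S(V)$, so the crossings coming from $L$ have null-homologous halves and survive, while those of $V$ are deleted; hence $\overline{L\#V}=L$, yet $L\#V$ is non-classical. Verifying that the crossings of $V$ are genuinely deleted and that $L\#V$ has positive underlying genus is a direct computation on the Gauss diagram, which I would carry out to close the argument.
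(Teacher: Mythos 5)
There is a genuine gap, and it sits exactly where you placed your black box. Your deletion criterion is \emph{homological}: a crossing survives iff its halves are annihilated by every $\alpha\in H^{1}(S_{g},\Z_{2})$ killing $[K]$, i.e.\ iff the halves lie in the $\Z_{2}$-span of $[K]$ in $H_{1}(S_{g},\Z_{2})$. That is precisely the parity-group condition of Theorem~\ref{thth}, and it is strictly weaker than the condition Nikonov's theorem (Theorem~\ref{lmnik}) actually uses. Nikonov's projection keeps a crossing $v$ only if $h_{v,1}$ is a \emph{power of $\gamma_{v}$ in the fundamental group} $\pi_{1}(S_{g},v)$, and it is this non-abelian condition that makes the image classical: all surviving halves are powers of one homotopy class, so the band surface of the image has cyclic first homology and must be a sphere. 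Under your weaker $\Z_{2}$-condition no such conclusion is available, and a single deletion need not produce a classical diagram at all: the surviving chords form a Gauss diagram whose \emph{own} band surface can have positive genus, since crossings that are even before deletion can become essential afterwards (these projections are not idempotent, cf.\ Fig.~\ref{notidemp}). This is exactly why the paper's second construction (Section~4.1), which is built on your criterion, must \emph{alternate} the parity-group deletion with genus reduction and iterate, rather than conclude after one pass. So the ``genus-drop lemma'' you invoke is not what Theorem~\ref{lmnik} says, and as stated for your criterion it is unjustified.

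The second gap is functoriality. You apply the deletion to an arbitrary diagram $K$ on its own surface $S(K)$, but Nikonov's theorem is stated, and Kuperberg's theorem applies, only for \emph{minimal genus} diagrams. Under a stabilizing second Reidemeister move the surface changes, hence the whole space of admissible classes $\alpha$ changes, and with it the parity of \emph{every} crossing can change --- not just the two new $R2$ crossings; your case analysis of ``both essential or both inessential'' does not address this. The paper's proof is organized in two steps precisely to avoid this problem: Theorem~\ref{lmkey} first shows that any non-minimal diagram admits a strictly smaller equivalent diagram (the proof needs its own idea --- Kuperberg produces an equivalent diagram on the same surface missing a destabilizing curve $\gamma$, and the $\gamma$-parity projection is applied to an entire chain of Reidemeister moves connecting the two diagrams); iterating this reaches minimal genus, and only there is the $\pi_{1}$-based projection applied, where Kuperberg's theorem guarantees that equivalent diagrams are joined by genus-preserving moves with which the projection commutes. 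Your proposal has no substitute for Theorem~\ref{lmkey}, and without it neither property~2 nor property~3 of Theorem~\ref{mainthm} goes through. (Your surjectivity argument via connected sum is essentially the paper's, and would be fine once the construction itself is repaired.)
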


The discrimination between ``true classical'' crossings and those
crossings which will become virtual is of the topological nature, as
we shall see in the proof of Theorem \ref{mainthm}.

As usual, we make no distinction between virtually isotopic
diagrams: a  virtual diagram is said to be {\em classical} if the
corresponding Gauss diagram represents a classical knot.

Thus, it makes sense to speak about a map from the set of virtual
knots to the set of classical knots. This map will be useful for
lifting invariants from virtual knots to classical knots.

We shall denote this map by $K\to f(K)$ where $K$ means the knot
type represented by $K$, and $f(K)$ means the resulting knot type of
the corresponding classical knots.

The only statement of the theorem which deals with diagrams of knots
which are not classical, is 4). Otherwise we could just project all
diagarams which do not represent classical knots to the unknot
diagram (without classical crossings), and the functorial map would
be rather trivial.

Nevertheless, as we shall see, one can construct various maps of
this sort. Different proofs of Theorem \ref{mainthm} can be used for
constructing various functorial maps and establishing properties of
knot invariants.

A desired projection would be one for which there is a well defined
mapping at the level of Gauss diagrams, and the projection is such
that if any two diagrams which are connected by a Reidemeister
moves, their images are connected by the same Reidemeister move or
by a detour move. Unfortunately, such projections seem not to exist
(see the discussion in the end of the paper); see also Nikonov's
Lemma (Theorem \ref{lmnik}).

For example, based on the notion of weak parity and parity groups,
we shall construct another projection satisfying the conditions of
Theorem \ref{mainthm}; the construction will not be in two turns as
in the case when Nikonov's lemma is applied; however, this map will
``save'' more classical crossings.

From Theorem \ref{mainthm} we have the following two corollaries
\begin{crl}
Let ${\cal K}$ be an isotopy class of a classical knot. Then the
minimal number of classical crossings for virtual diagrams of ${\cal
K}$ is realized on classical diagrams (and those obtained from them
by the detour move). For every non-classical diagram realizing a
knot from ${\cal K}$, the number of classical crossings is strictly
greater than the minimal number of classical crossings.

Moreover, minimal classical crossing number of a non-classical
virtual knot is realized only on minimal genus diagrams.
 \label{crl1}
\end{crl}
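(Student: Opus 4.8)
The plan is to deduce the whole statement from Theorem \ref{mainthm}, using that the projection fixes classical knot types. First I would record that if $K'$ is itself a classical diagram, then item 2 of Theorem \ref{mainthm} gives $\bar{K'}=K'$; hence for a classical isotopy class ${\cal K}$ the induced map satisfies $f({\cal K})={\cal K}$. Combining this with functoriality (item 3), every virtual diagram $K$ representing ${\cal K}$ has a projection $\bar K$ that is a \emph{classical} diagram again representing ${\cal K}$.

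Write $c(\cdot)$ for the number of classical crossings, i.e.\ the number of chords of the Gauss diagram. Since $\bar K<K$ (item 1), $\bar K$ is obtained from $K$ by deleting chords, so $c(\bar K)\le c(K)$, with equality precisely when no chord is deleted, that is, when $\bar K=K$ as Gauss diagrams; by item 2 this happens iff $K$ is classical. Let $m_{v}$ (resp.\ $m_{c}$) denote the minimum of $c$ over all virtual (resp.\ classical) diagrams of ${\cal K}$; trivially $m_{v}\le m_{c}$. For any virtual diagram $K$ of ${\cal K}$ the classical diagram $\bar K$ of ${\cal K}$ yields $m_{c}\le c(\bar K)\le c(K)$, and minimizing over $K$ gives $m_{c}\le m_{v}$, hence $m_{v}=m_{c}$. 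If now $K$ realizes $m_{v}=m_{c}$, then $m_{c}\le c(\bar K)\le c(K)=m_{c}$ forces $c(\bar K)=c(K)$, so $\bar K=K$ and $K$ is classical by item 2. Thus the minimum is attained only on classical diagrams (and their detour-move images), and every non-classical diagram of ${\cal K}$ has $c(K)>c(\bar K)\ge m_{c}=m_{v}$, i.e.\ strictly more crossings than the minimum. This settles the first two assertions.

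For the ``moreover'' statement ${\cal K}$ is non-classical, so its minimal underlying genus $g({\cal K})$ is positive, and I would argue by contradiction: suppose a diagram $K$ realizes the minimal classical crossing number but has underlying genus $g(K)>g({\cal K})$. The idea is to exploit the observation made before Kuperberg's Theorem that the genus of the underlying surface is changed only by the second Reidemeister move, a decreasing second move being exactly a destabilization that simultaneously erases two classical crossings. By the band-presentation bijection and Kuperberg's Theorem there is a sequence of Reidemeister moves taking $K$ to a minimal genus diagram; since the net genus must drop from $g(K)$ to $g({\cal K})$, at least one genuine destabilization (a decreasing second move) occurs. The goal is to show that the excess genus can always be absorbed by a sequence whose net effect on the crossing count is a strict decrease, contradicting the minimality of $c(K)$.

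The main obstacle is precisely this last step: along a Kuperberg/stabilization sequence the genus need not be monotone, so increasing second moves (which add both crossings and genus) may be interleaved with the destabilizations, and one cannot simply tally crossings. I would instead try to prove a local lemma: a minimal-crossing diagram of excess genus must already harbor a destabilizable configuration — equivalently, its underlying surface, although filled by the diagram, admits a compression after a crossing-nonincreasing adjustment — and performing this destabilization strictly lowers $c$. A natural auxiliary tool is the Euler-characteristic identity $c(K)=F(K)+2g(K)-2$ for the cellularly embedded diagram, where $F(K)$ is the number of rotating cycles; comparing it with the corresponding identity on a minimal genus diagram shows only that excess genus is compensated by extra faces, so the delicate point is to convert such a surplus of faces into an explicit reducing move rather than a numerical inequality. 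Ruling out that every genus reduction is forced to pass through a crossing increase is the crux of the argument.
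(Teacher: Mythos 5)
Your handling of the first two assertions is correct and is essentially the paper's own (one-line) argument: the projection $K\mapsto{\bar K}$ of Theorem \ref{mainthm} deletes chords of the Gauss diagram, preserves the knot type, lands in classical diagrams, and fixes exactly the classical ones; this forces the virtual and classical minima to coincide and gives the strict inequality for every non-classical diagram of a classical knot class.

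The genuine gap is in the ``moreover'' part, and you have diagnosed it yourself: your route --- tracking genus and crossing changes along a Kuperberg/stabilization sequence of Reidemeister moves --- cannot be closed, because stabilizations and destabilizations interleave uncontrollably and the Euler-characteristic count only yields an inequality, not a reducing move. What you are missing is that this step is exactly Theorem \ref{lmkey}, already stated (and proved) in the paper: if a diagram $K$ has non-minimal underlying genus, there exists a diagram $K'<K$ in the same knot class, and the deletion is strict (at least one chord is removed). Its proof does not count crossings along a sequence of moves at all; instead, Kuperberg's theorem supplies an equivalent diagram ${\tilde K}$ on the \emph{same} surface $S_{g}$ disjoint from some essential curve $\gamma$, and $\gamma$ defines a homological parity in which all crossings of ${\tilde K}$ are even while $K$, since it genuinely fills $S_{g}$, has at least one odd crossing. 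Applying the $\gamma$-parity projection (Theorem \ref{gsthm}) to $K$ and to a chain of Reidemeister moves from $K$ to ${\tilde K}$ yields a diagram $K'$ with strictly fewer classical crossings that is equivalent to $K$. Given this, the ``moreover'' statement is immediate: a crossing-minimal diagram of a non-classical virtual knot cannot have non-minimal genus, since Theorem \ref{lmkey} would produce a diagram of the same knot with strictly fewer classical crossings. So the step you called ``the crux'' should not be attacked by move-counting; it is resolved by a parity argument, which is the central mechanism of the whole paper.
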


Indeed, the projection map from the main theorem decreases the
number of classical crossings, and preserves the knot type.

The observation that the following corollary is a consequence from
Theorem \ref{mainthm} is due to V.V.Chernov (Tchernov).

\begin{crl}
Let ${\cal K}$ be a classical knot class. Then the bridge number for
the class ${\cal K}$ can be realized on classical diagrams of $K$
only.

Moreover, minimal bridge number of a non-classical virtual knot is
realized on minimal genus diagrams (here we do not claim that it can
not be realized on non-classical daigrams).

\label{bridge}
\end{crl}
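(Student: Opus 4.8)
The plan is to derive both assertions of Corollary~\ref{bridge} from the projection map $f$ of the Main Theorem (Theorem~\ref{mainthm}) together with one combinatorial observation about how bridges behave when chords are deleted. The observation I would isolate as a lemma is the following \emph{monotonicity of the bridge count}: if a Gauss diagram $G'$ is obtained from $G$ by deleting chords, then the number of bridges of $G'$ is at most that of $G$. Using the planar description of a bridge (a maximal overpass, i.e.\ a maximal arc of consecutive overcrossings running between two undercrossings), the number of bridges equals the number of maximal runs of overcrossings in the cyclic word of over/under events read along the core circle. Deleting one chord removes exactly one overcrossing event and one undercrossing event from this word, and erasing a single symbol can only shorten a run or merge two adjacent runs, hence never increases the number of maximal overpasses. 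Applying this to the two deleted symbols, and then iterating over all deleted chords, proves the lemma. This is the step I would write out in full, since the whole argument rests on it.

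Next, for a \emph{classical} class $\mathcal{K}$ I would argue as follows. By Theorem~\ref{mainthm}(2) every classical diagram is fixed by $f$; choosing a classical representative of $\mathcal{K}$ and invoking functoriality (Theorem~\ref{mainthm}(3)) shows $f(\mathcal{K})=\mathcal{K}$, so that every virtual diagram $K$ of $\mathcal{K}$ projects to a classical diagram $\bar K$ which again represents $\mathcal{K}$. Now take a virtual diagram $K$ of $\mathcal{K}$ realizing the virtual minimal bridge number $b$. Since $\bar K < K$ means $\G(\bar K)$ is obtained from $\G(K)$ by deleting chords, the monotonicity lemma gives that the classical diagram $\bar K$ has bridge number at most $b$. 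As classical diagrams form a subfamily of virtual diagrams, the minimum of the bridge number over classical diagrams of $\mathcal{K}$ is at least $b$; combined with the diagram $\bar K$ this forces equality, so the minimal bridge number is attained on the classical diagram $\bar K$. Because the bridge number is read off the Gauss diagram, it is unchanged under detour moves, so nothing is lost in passing between virtually isotopic representatives.

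Finally, the ``moreover'' for a \emph{non-classical} knot $\mathcal{K}$ is the genuinely harder part, because here $f$ carries $\mathcal{K}$ to a \emph{different} (classical) knot and therefore cannot be used to stay inside the isotopy class. Instead I would begin with a diagram $D$ realizing the minimal bridge number $b$ and attempt to lower its underlying genus to the knot's minimal genus $g_{\min}$ without increasing the number of bridges. The natural genus-reducing move is handle destabilization, which is performed along a curve on the underlying surface disjoint from the diagram and hence leaves $\G(D)$ — and therefore the bridge number — literally unchanged. The obstacle is that $D$ fills its own underlying (Carter) surface, so no such disjoint curve is available until $D$ has first been simplified; one must interleave destabilizations with Reidemeister reductions, and the crux is to guarantee that these reductions can be chosen so the bridge number never increases while the genus descends to $g_{\min}$.

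I expect this last point to be the main difficulty. The strategy I would pursue is to select, among all diagrams of bridge number $b$, one of least underlying genus, and then to show that if its genus exceeded $g_{\min}$ it would admit a bridge-non-increasing genus-reducing move, so that minimality of the genus among bridge-$b$ diagrams would force it to equal $g_{\min}$. Here I would organize the passage through low-genus diagrams using Kuperberg's Theorem, and control the bridge count using the same deletion/monotonicity mechanism as above (reductions that remove chords cannot raise the bridge number). In line with the statement, I would not attempt to show that the realizing diagram must be minimal-genus uniquely, nor that non-classical diagrams cannot also realize the minimum.
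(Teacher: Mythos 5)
The first half of your proposal coincides with the paper's own proof. The paper's entire argument for Corollary~\ref{bridge} is precisely your monotonicity observation: if $K'<K$ (i.e., $\G(K')$ is obtained from $\G(K)$ by deleting chords), then $br(K')\le br(K)$, because turning a classical crossing into a virtual one can only merge two bridges into one, never create a new bridge; this is then combined with the projection ${\bar K}<K$ of Theorem~\ref{mainthm}, exactly as you do for a classical class ${\cal K}$. Your run-length verification on the cyclic over/under word is a correct (if more detailed) way of writing the same one-line observation.

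However, your treatment of the ``moreover'' clause has a genuine gap, and it sits exactly where you stop and declare the step to be ``the main difficulty.'' You propose to reduce the genus of a minimal-bridge diagram by interleaving handle destabilizations with Reidemeister reductions while controlling the bridge count, and you admit you cannot guarantee this; in effect you are trying to re-prove from scratch a statement the paper has already established and which you never invoke, namely Theorem~\ref{lmkey}: \emph{any} virtual diagram $K$ whose underlying genus is not minimal for its knot class admits a diagram $K'<K$ representing the same knot. Given that theorem, the ``moreover'' part is immediate from your own lemma: starting from a diagram realizing the minimal bridge number $b$, apply Theorem~\ref{lmkey}; monotonicity gives $br(K')\le b$, hence $br(K')=b$, and since $K'$ has strictly fewer chords than $K$, iterating the operation (as in the paper's proof of Theorem~\ref{mainthm}) must terminate, and it can terminate only at a diagram of minimal genus, which still represents the same knot and still has bridge number $b$. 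Note also that the well-ordering you chose --- least underlying genus among bridge-$b$ diagrams --- does not combine correctly with Theorem~\ref{lmkey}, since $K'<K$ need not have smaller genus than $K$; the quantity to induct on is the number of chords, not the genus. So the second assertion is not an open problem requiring new destabilization arguments; it follows at once from Theorem~\ref{lmkey} plus monotonicity, and your proposal is incomplete precisely because it overlooks that theorem.
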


\begin{proof}
Indeed, it suffices to see that if $K'<K$ then $br(K')\le br(K)$:
when replacing a classical crossing with a virtual crossing, the
number of bridges cannot be increased; it can only decrease because
two bridges can join to form one bridge.\label{crl2}
\end{proof}

\begin{rk}
We do not claim that the diagram $K'$ representing the class $f(K)$
is unique. In fact, we shall construct many maps satisfying the
conditions of Theorem \ref{mainthm}. In the last section of the
present work we discuss the question, to which extent the diagram
$K'$ can be defined uniquely by the diagram $K$, see the discussion
in the last section of the paper.
\end{rk}

Theorem \ref{mainthm} allows one to lift invariants of classical
knots to virtual knots. The straighforward way to do it is to
compose the projection with the invariant in question. However,
there is another way of doing it where crossings which are not
classical, are not completely forgotten (made virtual) but are
treated in another way than just usual ``true classical'' crossings.
In similar cases when projection is well defined at the level of
diagrams, this was done in \cite{Sbornik1,Af} etc.: in these papers
 a distinction between even and odd crossings was taken into account
 to refine many known invariants (note that, according to the parity
 projection map, one can completely disregard odd crossings; on the
 other hand, they can be treated as classical crossings as they were
 from the very beginning).

The proof of Theorem \ref{mainthm} is proved in two steps.
\begin{thm}
Let $K$ be a virtual diagram, whose underlying diagram genus is not
minimal in the class of the knot $K$. Then there exists a diagram
$K'<K$ in the same knot class. \label{lmkey}
\end{thm}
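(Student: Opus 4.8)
The plan is to work entirely inside the band presentation and to exploit the fact that the underlying diagram genus is a quantity read off from the Gauss diagram alone. Writing $n$ for the number of chords of $\G(K)$ and $c$ for the number of rotating cycles (boundary components of $S'(K)$), the Euler characteristic computation --- the surface $S'(K)$ retracts onto the underlying four-valent graph, which has $n$ vertices and $2n$ edges, and $S(K)$ is obtained by capping the $c$ boundary circles --- gives
\[
g(S(K)) = \frac{n+2-c}{2}.
\]
Since the first and third Reidemeister moves and the detour move leave $n-c$ unchanged, the genus can only be altered by second Reidemeister moves; a genus-decreasing one deletes two chords while keeping $c$ fixed. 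So I first reduce the statement to the following: after performing genus-preserving normalizations, one can exhibit a genus-decreasing second Reidemeister move whose two chords are already chords of $\G(K)$ itself, for then deleting exactly those two chords produces a $K'$ with $\G(K')<\G(K)$ in the same class.

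That such a reduction is available at all follows from Kuperberg's theorem together with the band-presentation (stable equivalence) theorem of \cite{KK,KaV}: there is a sequence of Reidemeister moves from $K$ to a minimal genus diagram $K_0$, and since $g(K)>g(K_0)$ and only second moves change the genus, at least one genus-decreasing second move must occur along the way. The more intrinsic route, which also explains the ``four crossings disappearing'' phenomenon of Figure~\ref{vkclasproj}, is to read off the handle being removed: choose a non-separating curve $C$ running over a handle of $S(K)$ that is compressed on the way to minimal genus, let $\alpha\in H^{1}(S(K),\Z_{2})$ be its Poincar\'e dual, and form $K'$ by deleting every chord that is \emph{odd} with respect to the homological parity determined by $\alpha$. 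These are exactly the crossings whose two branches run over the chosen handle, and the rotating-cycle count then shows $g(K')<g(K)$, so $K'<K$ and the genus strictly drops.

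The main obstacle, common to both routes, is to prove that this deletion \emph{preserves the virtual knot type}, i.e.\ that virtualizing the selected crossings is an isotopy and not a genuine change of knotting. I would establish this by a minimality argument: choose $C$ (equivalently, the genus-decreasing second move) so that it meets $\delta$ as simply as possible, so that at each selected crossing the local picture is a standard bigon that compresses along $C$, the two branches separating onto the two disks created by the destabilization. Invoking the auxiliary lemmas of Section~3 to verify that the compression is supported away from the remaining chords --- so that the linking pattern of the surviving crossings, hence the rest of $\G(K)$, is untouched --- one concludes that $K'$ and $K$ represent the same virtual knot.

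Finally, because $\G(K')$ has strictly fewer chords than $\G(K)$, the construction terminates, and the lemma as stated (existence of \emph{some} $K'<K$ in the class) is immediate from a single application; iterating it gives the reduction to minimal genus that is needed for Theorem~\ref{mainthm}. I expect the genuine difficulty to lie not in the genus bookkeeping above, which is routine, but precisely in the knot-type preservation step: controlling the global effect on $\delta$ of virtualizing a whole family of homologically odd crossings, rather than a single cancelling pair, is where the topological input (minimality of $C$, or commuting the chosen second move past the intervening genus-preserving moves) is essential.
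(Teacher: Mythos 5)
Your construction of $K'$ coincides with the paper's: take a curve $\gamma$ (your $C$) witnessing non-minimality of the genus and delete the chords that are odd for the associated homological parity. But the step you yourself single out as the real difficulty --- that this deletion preserves the knot type --- is a genuine gap, and neither of your two routes closes it. The first route's reduction is not even well defined: Kuperberg's theorem guarantees a genus-decreasing second Reidemeister move \emph{somewhere} in a chain of moves from $K$ to a minimal diagram, but the two chords cancelled by that move belong to an intermediate diagram (possibly containing many crossings created by earlier increasing moves), not to $\G(K)$; ``commuting the chosen move past the intervening moves'' so that its chords become chords of $\G(K)$ is exactly as hard as the theorem itself. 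The second route fares no better: virtualizing the whole family of odd crossings is a global operation, the odd crossings need not sit in bigons or in any standard local position relative to $C$, and the ``auxiliary lemmas of Section 3'' you propose to invoke do not exist --- Section 3 of the paper \emph{is} the proof of Theorem \ref{lmkey}, so the appeal is circular.

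The missing idea is that no geometric control over how $C$ meets the diagram is needed at all: knot-type preservation is delivered by the functoriality of parity projection (Theorem \ref{gsthm}). The paper uses Kuperberg's theorem not merely to ensure a decreasing move exists, but to produce a diagram $\tilde K$ on the \emph{same} surface $S_{g}$, connected to $K$ by Reidemeister moves performed within $S_{g}$, and disjoint from the destabilizing curve $\gamma$. Consequently every crossing of $\tilde K$ is even for the $\gamma$-parity, so $pr_{\gamma}(\tilde K)=\tilde K$; meanwhile $K$, having underlying diagram genus exactly $g$, has at least one odd crossing, so $pr_{\gamma}(K)=K'<K$ strictly. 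Applying $pr_{\gamma}$ to every diagram in the chain of moves from $K$ to $\tilde K$ turns it, by Theorem \ref{gsthm}, into a chain of Reidemeister (or trivial) moves from $K'$ to $\tilde K$, whence $K'$ and $K$ represent the same knot. This parity argument replaces, and is the only known substitute for, the compression analysis you were attempting.
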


\begin{thm} [I.M.Nikonov]
There is a map $pr$ from minimal genus virtual knot diagrams to
classical knot diagrams such that for every knot $K$ we have
$pr(K)<K$ and if two diagrams $K_{1}$ and $K_{2}$ are related by a
Reidemeister move (performed within the given minimal genus diagram)
then their images $pr(K_{1})$ and $pr(K_{2})$ are related by a
Reidemeister move. \label{lmnik}
\end{thm}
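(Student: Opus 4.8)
The plan is to build $pr$ from the homological parity introduced earlier, applied on the fixed minimal genus surface. Let $K$ be a minimal genus diagram, realised as a curve $\delta$ on the surface $S_g$ of genus $g$. First I would choose a basis $\alpha_1,\dots,\alpha_m$ of the subspace $W=\{\alpha\in H^1(S_g,\Z_2):\alpha([K])=0\}$, where $[K]\in H_1(S_g,\Z_2)$ is the class of the knot (so $W=H^1(S_g,\Z_2)$ when $[K]=0$, and $W$ is a hyperplane otherwise). By \cite{IMN} each $\alpha_i\in W$ gives a homological parity $p_{\alpha_i}$, for which a crossing $v$ is even exactly when $\alpha_i$ vanishes on the halves $h_{v,1},h_{v,2}$. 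I would then declare $v$ to be kept (``true classical'') precisely when both of its halves are null-homologous in $H_1(S_g,\Z_2)$ --- equivalently, when $v$ is even for every $p_{\alpha_i}$ --- and set $pr(K)$ to be the diagram whose Gauss diagram is obtained from $\G(K)$ by deleting all remaining (homologically essential) chords.

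Next I would verify that the rule $p(v)=0$ iff both halves of $v$ are null-homologous is a parity \emph{in the weak sense}. Axiom (1), together with the first- and second-Reidemeister instances of axiom (2), is inherited from the honest parities $p_{\alpha_i}$, since $p$ is their pointwise maximum: the crossing in an R1 move is even for each $p_{\alpha_i}$, and the two crossings of an R2 move have equal $p_{\alpha_i}$-values for every $i$, hence equal $p$-values. For the third Reidemeister move the sum of the three $p_{\alpha_i}$-values is even for each $i$, but the sum of the maxima need not be; the only configuration this can produce is three odd crossings, which is exactly the case a weak parity permits. Hence Theorem~\ref{gsthm} applies verbatim and $pr=pr_p$ is well defined, and the bookkeeping in the definition of (weak) parity carries a Reidemeister move between $K_1$ and $K_2$ to a Reidemeister move (or to a detour move, which acts trivially on Gauss diagrams) between $pr(K_1)$ and $pr(K_2)$. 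Crucially, a move performed \emph{within the minimal genus diagram} fixes $S_g$ and therefore preserves the homology classes of all halves, so $p$ is preserved and the equivariance asserted in the theorem holds; it is precisely to keep $S_g$ (and hence $p$) fixed that stabilising moves are excluded. The relation $pr(K)<K$ is immediate, since chords are only deleted.

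The substantive point is that $pr(K)$ is classical, i.e. its Gauss diagram is planar (genus $0$). By Theorem~\ref{gsthm} I may compute on the minimal genus representative, where every surviving crossing has both halves null-homologous. The surface $S_g$ is assembled from the diagram itself, so $H_1(S_g,\Z_2)$ is generated by the half-classes $[h_{v,1}]$; I would then show, through the atom / underlying-surface description of $\G(K)$, that the underlying genus of any diagram equals the rank of the span of its half-classes --- equivalently, that the homologically essential crossings are exactly those carrying the handles of $S_g$. Deleting every crossing with nonzero half-class leaves a diagram all of whose half-classes vanish, forcing its minimal surface to have trivial $H_1$ and hence genus $0$; by Kuperberg's theorem~\cite{Kup} this conclusion is intrinsic to the resulting virtual knot, so $pr(K)$ is classical. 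Finally $pr(K)=K$ iff no chord is deleted, i.e. iff $[h_{v,1}]=0$ for all $v$, which by the same genus computation is equivalent to $g=0$, that is, to $K$ being classical.

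The main obstacle is the genus identity of the previous paragraph: translating ``all surviving crossings are homologically trivial'' into ``the projected Gauss diagram is planar.'' I expect this to require a careful analysis of how deleting a chord changes the number of rotating cycles, and thus the Euler characteristic of the underlying surface, matched against the drop in the rank of the half-class span. The homology-class subtlety when $[K]\neq 0$ (so that $W$ is only a hyperplane) has to be resolved here as well, by checking that the single homology direction carried by $\delta$ itself does not obstruct planarity once the essential chords are removed and the diagram is regarded as an abstract closed Gauss code.
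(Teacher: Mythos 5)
The weak-parity bookkeeping in your first two paragraphs is sound (the pointwise maximum of the cohomological parities over a basis of $W$ is indeed a parity in the weak sense, and it never produces exactly one odd crossing in a third Reidemeister move), and $pr(K)<K$ plus move-equivariance within the fixed surface come out correctly. But the heart of the theorem --- that the image is a \emph{classical} diagram --- is exactly where your approach breaks, and this is not a detail you can defer: it is the step whose difficulty dictates the paper's choice of construction. The paper does not use $\Z_2$-homology at all here. Nikonov's criterion is homotopical: a crossing $v$ survives if and only if its half $h_{v,1}$ is a \emph{power of the class $\gamma_{v}$ of the knot itself} in $\pi_{1}(S_{g},v)$. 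Classicality of $pr(K)$ is then deduced from the rigidity of this condition: every loop in the projected diagram's graph is (up to conjugation along the knot) a power of one and the same class, so the band surface of $pr(K)$ has cyclic first homology and closes up to $S^{2}$; the same rigidity is what makes the third-Reidemeister analysis work (if $A$ and $B$ survive, then $h_{C,1}$ is a product of conjugates of powers of $\gamma$, hence again a power of $\gamma_{C}$). Your condition ``both halves null-homologous in $H_{1}(S_{g},\Z_{2})$'' is strictly weaker: it is satisfied by commutator-type halves such as $[a_{1},b_{1}]$, which are homotopically essential and are tied to no single annulus. Nothing prevents two such crossings from being linked, so nothing prevents the surviving chords from forming a non-planar Gauss diagram.

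Concretely, your ``genus identity'' argument proves the wrong implication. What is true (and provable: the half-classes together with $[K]$ generate $H_{1}$ of the underlying surface of a diagram) is: \emph{if all crossings of a diagram $D$ have half-classes in the span of $[D]$ computed in $D$'s own underlying surface $S(D)$, then $S(D)=S^{2}$.} Your construction, however, only gives vanishing of the surviving half-classes in the \emph{old} surface $S_{g}=S(K)$; after deletion of the odd chords, the projected diagram has a \emph{new} underlying surface $S(pr(K))$, and the vanishing does not transfer --- the natural map goes from the band surface of $pr(K)$ \emph{into} $S_{g}$, not the other way, so a class can die in $S_{g}$ while surviving in $H_{1}(S(pr(K)))$. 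This is precisely the non-idempotence phenomenon the paper illustrates in Fig.~\ref{notidemp}, and it is why the paper's \emph{own} homological construction --- the parity-group projection of Section 4, which is essentially identical to your criterion, since $G(D)\cong H_{1}(S_{g},\Z_{2})/\langle[K]\rangle$ by Theorem \ref{thth} --- is not claimed to produce a classical diagram in one pass: it must be \emph{iterated}, alternating Theorem \ref{lmkey} (to return to minimal genus) with the homological projection until the process stabilizes on a classical diagram. So your proposal, read charitably, reconstructs the Section 4 map $g$ rather than Theorem \ref{lmnik}, and even then only after adding the missing iteration; as a proof of Theorem \ref{lmnik} as stated it has a genuine gap at the classicality claim. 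A smaller inconsistency worth fixing: ``both halves null-homologous'' and ``even for every $p_{\alpha_{i}}$, $\alpha_{i}\in W$'' are not equivalent when $[K]\neq 0$ in $H_{1}(S_{g},\Z_{2})$ (the latter permits $[h_{v,1}]=[K]$, while the former is satisfied by no crossing at all), and your argument silently switches between the two.
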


\begin{proof}[Proof of the Main Theorem (Theorem \ref{mainthm})]
We shall construct the projection map in two steps.

 Let $K$ be a virtual knot diagram. If $K$ is of a minimal genus,
then we take ${\bar K}$ to be just $pr(K)$ as in Theorem
\ref{lmnik}. Otherwise take a diagram $K'$ instead of $K$ as in
Theorem \ref{lmkey}. It is of the same knot type as $K$. If the
genus of the resulting diagram is still not minimal, we proceed by
iterating the operation $K'$, until we get to a diagram $K''$ of
minimal genus which represents the class of $K$ and $K''<K$. Now,
set ${\bar K}=pr(K'')$.

One can easily see that if we insert a small classical knot $L$
inside an edge of a diagram of $K$, then $f(K\# L)=f(K)\# f(L)$. So,
the last statement of the theorem holds as well.
\end{proof}

\section{Proofs of Key Theorems}

\subsection{The Proof of Theorem \ref{lmkey}}

Let $K$ be a virtual knot diagram on a surface $S_{g}$ of genus $g$.
Assume this genus is not minimal for the knot class of $K$. Then by
Kuperberg's theorem it follows that there is a diagram ${\tilde K}$
on $S_{g}$ representing the same knot as $K$ and a curve $\gamma$ on
$S_{g}$ such that ${\tilde K}$ does not intersect $\gamma$. Indeed,
if there were no such diagram ${\tilde K}$, the knot in $S_{g}\times
I$ corresponding to the diagram $K$ would admit no destabilization,
and the genus $g$ would be minimal.

The curve $\gamma$ gives rise to a (co)homological parity for knots
in $S_{g}$ homotopic to $K$: a crossing is {\em even} if the number
if intersections of any of the corresponding halves with $\gamma$ is
even, and odd, otherwise.

Since $K$ has underlying diagram genus $g$, there exists at least
one odd crossing of the diagram $K$. Let $K$ be the result of
$\gamma$-parity projection applied to $K$. We have $K'<K$.

By construction, all crossings of ${\tilde K}$ are even.

Let us construct a chain of Reidemeister moves from $K$ to ${\tilde
K}$ and apply the $\gamma$-parity projection to it.

We shall get a chain of Reidemeister moves connecting $K'$ to
${\tilde K}$. So, $K'$ is of the same type as ${\tilde K}$ and $K$.
The claim follows.

\subsection{The Proof of Theorem \ref{lmnik}}
Let us construct the projection announced in Theorem \ref{lmnik}.
Fix a $2$-surface $S_{g}$. Let us consider knots in the thickening
of $S_{g}$ for which genus $g$ is minimal (that is, there is no
representative of lower genus for knots in question). Let $K$ be a
diagram of such a knot. We shall denote crossings of knot diagrams
in $S_{g}$ and the corresponding points on $S_{g}$ itself by the
same letter (abusing notation).

As above, with every crossing $v$ of $K$ we associate the two {\em
halves} $h_{v,1},h_{v,2}$, now considered as elements of the
fundamental group $\pi_{1}(S_{g},v)$, as follows. Let us smooth
 the diagram $K$ at $v$ according to the orientation of $K$. Thus, we
get a two-component oriented link with components $h_{v,1},h_{v,2}$.
Consider every component of this link represented as a loop in
$\pi_{1}(S_{g},v)$ and denote them again by $h_{v,1},h_{v,2}$.

Let $\gamma_{v},{\bar \gamma_{v}}$ be the two homotopy classes of
the knot $K$ considered as an element of $\pi_{1}(S_{g},v)$: we have
two classes because we can start traversing the knot along each of
the two edges emanating from $v$. Note that $h_{v,1}\cdot
h_{v,2}=\gamma_{v}$ and $h_{v,2}\cdot h_{v,1}={\bar \gamma_{v}}$.

Let us now construct a knot diagram $pr(K)$ from $K$ as follows. If
for a crossing $v$ we have $h_{v,1}=\gamma_{v}^{k}$ for some $k$
(or, equivalently, $h_{v,2}=\gamma_{v}^{1-k}$) then this crossing
remains classical for $K'$; otherwise, a crossing becomes virtual.
Note that it is immaterial whether we take $\gamma_{v}$ or ${\bar
\gamma_{v}}$ because if $h_{v,1}$ and $h_{v,2}$ are powers of the
same element of the fundamental groups, then they obviously commute,
which means that $\gamma_{v}={\bar {\gamma_{v}}}$.

\begin{st}
\begin{enumerate}

\item For every $K$ as above, $pr(K)$ is a classical diagram;

\item $K=pr(K)$ whenever $K$ is classical

\item If $K_{1}$ and $K_{2}$ differ by a Reidemeister move then
$pr(K_{1})$ and $pr(K_{2})$ differ by either a detour move or by a
Reidemeister move.

\end{enumerate}
\end{st}

\begin{proof}
Take $K$ as above and consider $pr(K)$. By construction, all
``halves'' of all crossings for $pr(K)$ are powers of the same
homotopy class. We claim that the underlying surface for $pr(K)$ is
a $2$-sphere. Indeed, when constructing a band presentation for
$pr(K)$, we see that the surface with boundary has cyclic homology
group. This happens only for a disc or for the cylinder; in both
cases, the corresponding compact surface will be $S^{2}$.

The situation with the first Reidemeister move is obvious: the new
added crossing has one trivial half and the other half equal to the
homotopy class of the knot itself.

Now, to prove the last statement, we have to look carefully at the
second and the third Reidemeister moves. Namely, if some two
crossings $A$ and $B$ participate in a second Reidemeister move,
then we have an obvious one-to-one correspondence between their
halves such that whenever one half corresponding to $A$ is an power
of $\gamma$, so is the corresponding half of $B$.

So, they either both survive in $pr(A),pr(B)$ (do not become
virtual) or they both turn into virtual crossings. So, for
$pr(A),pr(B)$ we get either the second Reidemeister move, or the
detour move. Note that here we deal with the second Reidemeister
move which does not change the underlying surface.
\begin{figure}
     \begin{center}
     \begin{tabular}{c}
     \includegraphics[width=10cm]{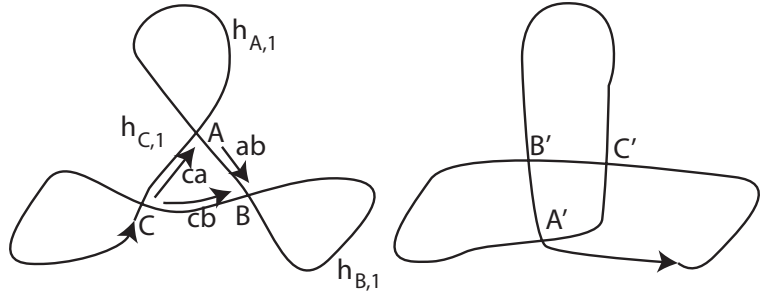}
     \end{tabular}
     \caption{Triviality of two crossings yields the triviality of the third one}
     \label{NikonovFig}
\end{center}
\end{figure}

Now, let us turn to the third Reidemeister move from $K$ to $K'$,
and let $(A,B,C)$ and $(A',B',C')$ be the corresponding triples of
crossings. We  see that the homotopy classes of halves of $A$ are
exactly those of $A'$, the same about $B,B'$ and $C,C'$. So, the
only fact we have to check that the number of surviving crossings
among $A,B,C$ is not equal to two (the crossings from the list
$A',B',C'$ survive accordingly). This follows from Fig.
\ref{NikonovFig}.

Indeed, without loss of generality assume $A$ and $B$ survive. This
means that the class $h_{A,1}$ is a power of the class of the whole
knot in the fundamental group with the reference point in $A$, and
$h_{B,1}$ is a power of class of the knot with the reference point
at $B$.

Let us not investigate $h_{C,1}$ (for convenience we have chosen
$h_{C,1}$ to be the upper right part of the figure).

We see that $h_{C,1}$ consists of the following paths: $(ca)
h_{A,1}(ab)h_{B,1}(cb)^{-1}$, where $(ca), (ab),(cb)$ are non-closed
paths connecting the points $A$, $B$, and $C$. Now, we can homotop
the above loop to $(ca)h_{A,1}(ca)^{-1}(ca)(ab)h_{B,1}(cb)^{-1}$ and
then homotop it to the product of $(ca)h_{A,1}(ca)^{-1}$ and
$(cb)h_{B,1}(cb)^{-1}$.

We claim that both these loops are homotopic to $\gamma_{C}^{l}$ and
$\gamma_{C}^{m}$ for some exponents $m,l$. Indeed, $h_{A,1}$ is
$\gamma_{A}^{k}$ by assumption. Now, it remains to observe that in
order to get from $\gamma_{A}$ to $\gamma_{C}$, it suffices to
``conjugate'' by a path along the knot; one can choose $(ac)$ as
such a path. The same holds about $h_{C,1}$.

So, if all crossings $A,B,C$ survive in the projection of $pr(K)$
and $A',B',C'$ survive in $pr(K')$ then we see that $pr(K')$ differs
from $pr(K)$ by a third Reidemeister move. If no more than one of
$A,B,C$ survives then we have a detour move from $pr(K)$ to
$pr(K')$.

\end{proof}

\section{The Parity Group, One More Projection, and Connected Sums}

In the above text, we have defined parity as a way of decorating
crossings by elements of $\Z_{2}$. It turns out that there is a way
to construct an analogue of parity valued in more complicated
objects, namely, in groups, depending on the knot diagram. Such
``group-valued'' parities can be also used for projections, see,
e.g., \cite{IMN}.

This group-valued parity can be thought of as a parity in a weak
sense: a crossing is even if the corresponding element of the parity
group is trivial, and odd otherwise.

However, this can be done for diagrams of some specific genus only.

Let $D$ be a virtual diagram of genus $g$. Now, let us construct the
{\em universal parity group} $G(D)$. Note that this group will be
``universal'' only for a specific genus.

Recall that pasted cycles appear in a band--pass presentation of a
virtual knot diagram as cycles on the boundary of a surface to be
pasted by discs. Every cycle can be treated as a $1$-cycle in the
$1$-frame of the knot diagram graph; the graph itself consists of
classical crossings (vertices) and edges between them. Thus, every
pasted cycle $C$ gives rise to a collection of classical crossings,
it touches.

We shall use the additive notation for this group. For generators of
$G(D)$ we take crossings of the diagram $D$. We define two sorts of
relations:

\begin{enumerate}

\item
 $2a_{i}=0$ for every crossing and there will also be relations
correspond to {\em pasted cycles}. Namely, a pasted cycle is just a
rotating cycle on the $4$-valent graph (shadow of the knot)

\item The sum
of crossings corresponding to any pasted cycle is zero.

\end{enumerate}

It is obvious that for a classical knot diagram $D$ the group $G(D)$
is trivial (otherwise the reader is referred to Theorem \ref{thth}
ahead).

Denote the element of the group $G$ corresponding to a crossing $x$
of the knot diagram, by $g(x)$.

In \cite{IMN} it is proved that the parity group gives rise to a
parity in a weak sense: all crossings for which the corresponding
element of the group is trivial, are thought of as {\em even}
crossings, and the other one are thought of as {\em odd crossings}.
Thus, we get the following

\begin{thm}
 For a virtual diagram $D$ with the surface $S_{g}$ genus $g$ the
group $G(D)$ is the quotient group of $H_{1}(S_{g},\Z_{2})$ by the
element generated by the knot. In particular, if  $D$ is a
checkerboard colourable diagram then $G(D)=H_{1}(S_{g},\Z_{2})$.

In particular, if $D_{1}$ and $D_{2}$ are nonstably equivalent
diagrams then $G(D_{1})=G(D_{2})$.

\label{thth}

\end{thm}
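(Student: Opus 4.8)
My plan is to identify the generators-and-relations group $G(D)$ with the stated cohomological quotient by constructing an explicit isomorphism, working on the fixed surface $S_g$ on which $D$ lives. First I would set up the geometric picture: the diagram $D$ sits on $S_g$ as a $4$-valent graph $\Gamma$ whose complement is a union of faces (the discs pasted onto the boundary of $S'(K)$); the pasted (rotating) cycles are precisely the boundary curves of the surface-with-boundary $S'(K)$, which become the attaching circles of those discs. The crossings of $D$ are the vertices of $\Gamma$, and each pasted cycle $C$ passes through a determined multiset of crossings. I would begin by recalling that for the $\Z_2$-homology of a closed surface built as a CW-complex, $H_1(S_g,\Z_2)$ is computed from the chain complex whose $2$-cells are these faces, and whose boundary map sends each face to the $\Z_2$-sum of the edges on its boundary.

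The key step is to translate this into the vertex-based language of $G(D)$. I would define a map $\phi$ from the free $\Z_2$-vector space on the crossings into $H_1(S_g,\Z_2)$ (or rather its relevant quotient) by sending each crossing $a_i$ to the $\Z_2$-homology class of a small loop encircling the corresponding point, and then show that the two families of relations defining $G(D)$ are exactly the kernel relations. The relations $2a_i=0$ are automatic over $\Z_2$. The substantive relations are the pasted-cycle relations: I expect to show that the sum of crossings along a pasted cycle corresponds, under $\phi$, to the boundary of a face, hence is null-homologous; conversely, that these are the \emph{only} relations, so that $G(D)$ is exactly the first homology modulo the extra class contributed by the knot $K$ itself traversing $\Gamma$. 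The role of the knot class is that the core circle $\delta$ of $D$ occupies a $1$-cycle, and the construction of $G(D)$ quotients out the element it generates; this is what produces the quotient $H_1(S_g,\Z_2)/\langle[K]\rangle$ rather than all of $H_1(S_g,\Z_2)$.

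For the checkerboard-colourable case I would argue that checkerboard colourability means the faces split into two classes so that the core circle $\delta$ bounds (it separates the surface into black and white regions), whence $[K]=0$ in $H_1(S_g,\Z_2)$ and the quotient is the full $H_1(S_g,\Z_2)$. The final sentence, that nonstably equivalent $D_1,D_2$ give \emph{equal} groups, I read as the assertion that $G(D)$ depends only on the genus $g$ (through $H_1(S_g,\Z_2)\cong\Z_2^{2g}$ modulo one relation), so that diagrams of the same minimal genus produce the same group; I would derive this directly from the cohomological identification just established.

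\smallskip

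The main obstacle I anticipate is the \emph{converse direction}: proving that the pasted-cycle relations \emph{generate} all relations, i.e.\ that $\phi$ descends to an \emph{injection} $G(D)\hookrightarrow H_1(S_g,\Z_2)/\langle[K]\rangle$ and not merely a well-defined surjection. Surjectivity and well-definedness follow from the boundary-map computation above, but counting dimensions and verifying that no accidental extra relation among the crossings appears requires a careful Euler-characteristic or rank argument relating the number of crossings, edges, and pasted cycles (faces) of $\Gamma$ on $S_g$. I would handle this by a dimension count: the rank of the relation space equals (number of pasted cycles) $-1$ plus the rank contributed by $[K]$, and matching this against $2g$ via $V-E+F=2-2g$ for the CW-structure of $S_g$ should close the argument.
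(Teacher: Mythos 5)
Your proposal founders on the definition of the map $\phi$: a small loop encircling a crossing point of $D$ bounds a disc in $S_g$, so its class in $H_1(S_g,\Z_2)$ is zero. The $\phi$ you define is therefore the zero homomorphism, and no amount of relation-checking or Euler-characteristic counting can promote it to the claimed isomorphism $G(D)\cong H_1(S_g,\Z_2)/\langle [K]\rangle$. The missing idea --- which is the entire content of the paper's (one-sentence) proof --- is to send each crossing $x$ to the homology class of one of its two \emph{halves} $h_{x,1},h_{x,2}$, i.e.\ the two loops obtained by smoothing the diagram at $x$ according to orientation. These classes are genuinely nontrivial in general, and they are exactly what bridges the vertex-generated group $G(D)$ and the edge-cycle description of $H_1(S_g,\Z_2)$: the sum of the halves of the crossings lying on a pasted cycle is homologous (mod $[K]$) to that cycle, which bounds the pasted disc, so the pasted-cycle relations of $G(D)$ are sent to zero.

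This also corrects your account of where the quotient by $[K]$ comes from. The presentation of $G(D)$ contains no relation ``kill the class of the core circle''; rather, the two halves of a crossing satisfy $h_{x,1}+h_{x,2}=[K]$ in $\Z_2$-homology, so the class of ``a half'' is well defined only after passing to $H_1(S_g,\Z_2)/\langle[K]\rangle$ --- the quotient is forced by the ambiguity in the choice of half, not imposed in the definition of $G(D)$. Your checkerboard-colourability step ($[K]=0$ in $\Z_2$-homology, hence the quotient is all of $H_1$) is fine, but your reading of the last clause is off: ``nonstably equivalent'' here means equivalent without (de)stabilization, i.e.\ within the same surface; the conclusion $G(D_1)=G(D_2)$ then follows because both the genus and the class $[K]$ are preserved by Reidemeister moves performed in $S_g$. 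It is not true that $G(D)$ depends on the genus alone: $H_1(S_g,\Z_2)/\langle[K]\rangle$ is $\Z_2^{2g}$ or $\Z_2^{2g-1}$ according as $[K]$ is zero or not.
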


To prove the theorem, it suffices to associate with every crossing
$x$ any of the two halves $h_{x,1}$ or $h_{x,2}$ and consider them
as elements of the above mentioned quotient group.

A careful look to the formulation of Theorem \ref{thth} shows that:
\begin{enumerate}
\item If a crossing $x$ corresponds to the first Reidemeister move,
then the corresponding element of the quotient group is equal to
zero.

\item If two crossings $x,y$ participate in the second Reidemeister
move, then the corresponding elements of the group $G(D)$ are equal
to each other.

\item If three crossings $a,b,c$ participate in a third Reidemeister
move then $h_{a}+h_{b}+h_{c}=0$ in $G$.

\end{enumerate}

Thus, the map to the group $G$ gives rise to the {\em parity in a
weak sense}, which means, in particular, that there is a
well-defined projection from knots in $S_{g}\times I$ to virtual
knots.

Let $K$ be a knot diagram in $S_{g}$. Consider $K$ as a virtual knot
diagram (up to virtual equivalence). Now, let $l(K)$ be the diagram
obtained from $K$ by making those crossings $x$ of $K$ virtual, for
which $h(X)\neq 0\in G(K)$.

\begin{thm}
If $K$ and $K_{1}$ are two diagrams of knots in $S_{g}\times I$
which differ by one Reidemeister move, then $l(K)$ and $l(K_{1})$
either differ by the same Reidemeister move, or coincide (are
virtually equivalent).

Moreover $l(K)$ is (virtually equivalent to) $K$ if and only if $K$
is (virtually equivalent to) a classical knot.
\end{thm}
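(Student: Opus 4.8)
The plan is to prove the two assertions of the final theorem by leveraging the group-valued parity machinery established in Theorem \ref{thth}, exactly as was done for the homotopy-class projection in the Statement following Theorem \ref{lmnik}. The crossing-deletion map $l(K)$ is governed by the invariant $h(x)\in G(K)$ assigned to each crossing, and the three bullet points extracted from Theorem \ref{thth} tell us precisely how this invariant behaves under the three Reidemeister moves. So the entire argument is a translation of those three facts into statements about which crossings survive under $l$.

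First I would handle the behaviour under Reidemeister moves, treating the three cases separately. For the first Reidemeister move, bullet (1) gives $h(x)=0$ for the newly created crossing, so $x$ survives in $l(K)$ and we get a genuine first Reidemeister move between $l(K)$ and $l(K_1)$. For the second move, bullet (2) gives that the two participating crossings $x,y$ satisfy $h(x)=h(y)$, so either both are $0$ (both survive, giving a second Reidemeister move on the images) or both are nonzero (both become virtual, so the images differ only by a detour move); this is the same dichotomy exploited in the proof of the Statement. For the third move, bullet (3) gives $h(a)+h(b)+h(c)=0$ in the $\Z_2$-group $G$, which rules out the forbidden configuration in which exactly one of the three is nonzero (equivalently, exactly two survive): a single nonzero term cannot sum to zero with two zeros. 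Hence among the three crossings either all survive (third Reidemeister move on the images) or at most one survives (detour move), establishing the first assertion. This is just the weak-parity projection theorem (Theorem \ref{gsthm}) applied to the weak parity coming from $G$, so I would cite that for well-definedness if desired.

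Second I would prove the ``if and only if'' characterisation. The easy direction: if $K$ is (virtually equivalent to) a classical knot, then it has a genus-zero representative, for which $G$ is trivial by the remark preceding Theorem \ref{thth} (or directly from Theorem \ref{thth}, since $H_1(S_0,\Z_2)=0$), so every crossing has $h(x)=0$ and no crossing is deleted, giving $l(K)=K$. The substantive direction is the converse: I must show that if $l(K)$ is virtually equivalent to $K$ then $K$ is classical. Here $l(K)=K$ means no crossing was deleted, i.e. $h(x)=0$ for every crossing $x$ of $K$. The plan is to argue, as in the proof of the Statement, that when all surviving halves are trivial in $G(K)=H_1(S_g,\Z_2)/[K]$, the band presentation of $K$ has a boundary surface with cyclic (or trivial) first homology, forcing the underlying surface to be a sphere and hence $K$ to be classical.

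The main obstacle I expect is this last direction. The subtlety is that $h(x)=0$ in the \emph{quotient} $G(K)=H_1(S_g,\Z_2)/\langle[K]\rangle$ only says the homology class of each half is a multiple of the knot's own class, not that it is literally null-homologous, so I cannot immediately conclude that the halves bound. I would need to show that the vanishing of all crossing-classes in the quotient group nonetheless forces the underlying genus to be zero — either by adapting the cyclic-homology argument from the Statement (showing the boundary surface of the band presentation has at most cyclic $H_1$, so compactifies to $S^2$), or by invoking Theorem \ref{thth} contrapositively, since a nonclassical minimal-genus diagram has $g\ge 1$ and hence nontrivial $G$, which would produce a crossing with $h(x)\neq0$. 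Care is needed because $K$ may not be of minimal genus; I would first reduce to the minimal-genus case, using that $l$ only deletes crossings and a nontrivial $G(K)$ on a minimal representative guarantees a deletion.
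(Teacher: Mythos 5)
Your handling of the first assertion is correct and is exactly the paper's argument: the paper's entire proof is the single sentence that the claim ``follows from general argument concerning parity in a weak sense,'' and your case-by-case translation of the three displayed properties of $h$ under the Reidemeister moves (in particular ruling out the configuration with exactly one nonzero class in the third move, which is the forbidden case for a weak parity), combined with the citation of Theorem \ref{gsthm}, is precisely what that sentence compresses.

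The ``moreover'' clause, which the paper does not argue at all, is where your proposal has genuine soft spots, though they are fixable and close more easily than you fear. First, in the easy direction you apply $l$ to a genus-zero \emph{representative} of the knot class rather than to the given diagram $K$; the equivalence between $K$ and that representative involves (de)stabilization, and neither $G$ nor $l$ has been shown invariant under stabilization (the paper stresses that the parity group is tied to a fixed genus), so that argument does not close as written. Second, in the converse you read ``$l(K)$ virtually equivalent to $K$'' as ``no crossing was deleted''; under a knot-class reading of equivalence this is an unjustified leap, but it is correct under the paper's own usage in the first assertion (``coincide (are virtually equivalent)''), namely equality up to detour moves: detour moves never change the Gauss diagram, while $l$ only deletes chords, so equivalence in this sense literally means no chord is deleted. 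Adopting that diagram-level reading also repairs the easy direction: a classical diagram has underlying genus $0$, so $G$ is trivial and nothing is deleted. Finally, the converse then closes without any reduction to minimal genus and without re-running the cyclic-homology argument: Theorem \ref{thth} applies to \emph{every} diagram of underlying genus $g$, so if $K$ is not classical then $g\geq 1$ and $G(K)=H_{1}(S_{g},\Z_{2})/\langle [K]\rangle$ has $\Z_{2}$-dimension at least $2g-1\geq 1$; and since the generators of $G(K)$ are by definition the crossings of $K$ themselves, nontriviality of $G(K)$ forces some crossing to satisfy $h(x)\neq 0$, so $l(K)$ has strictly fewer chords than $K$ and cannot coincide with it. Your hedged step ``which would produce a crossing with $h(x)\neq 0$'' is thus immediate from the definition of $G$, and the worry about non-minimal genus is a red herring.
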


The proof follows from general argument concerning parity in a weak
sense.

\subsection{One more projection}

Let us now give one more proof of Theorem \ref{mainthm}. In fact,
the map $f$ from our original proof of Theorem \ref{mainthm} kills
too many classical crossings.

For example, if we consider the classical trefoil diagram with three
``boxes'' shown in Fig.\ref{trefblackbox}. Assume every black box
represents a virtual knot diagram lying inside its minimal
representative which is homologically trivial in the corresponding
$2$-surface, and we put these diagrams into boxes after splitting
them at some points. Then, if these diagrams are complicated enough
then we see that all three middle classical crossings will become
virtual after applying Nikonov's projection.

\begin{figure}
\centering\includegraphics[width=200pt]{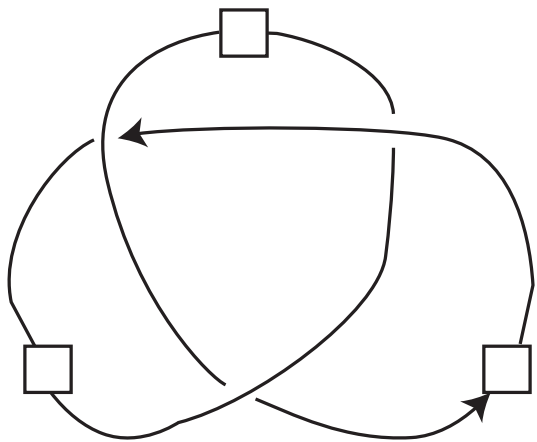}
\caption{Classical trefoil with black boxes}
 \label{trefblackbox}
\end{figure}

On the other hand, since these three virtual knots are homologically
trivial, their persistence does not affect the homological
triviality of the three crossings depicted in Fig.
\ref{trefblackbox}. So, there is a motivation how to find another
projection satisfying the condition of Theorem \ref{mainthm} which
does not kill the three crossings depicted in this Figure.

The reason is that the Nikonov projection is very restrictive and
makes many classical crossings virtual.

Let us now construct another map $g$ from virtual knots to classical
knots satisfying all conditions of Theorem \ref{mainthm}.

Take a virtual knot diagram $K$. If it is not a minimal genus
diagram, apply Theorem \ref{lmkey}. We get a diagram $K'$. If $K'$
is not yet of the minimal genus, apply Theorem \ref{lmkey} until we
get to a mininal genus diagram. Take this minimal genus diagram
$K_{m}$ and apply the projection with respect to the parity group.
Then (if necessary) we again reiterate Theorem \ref{lmkey} to get to
the minimal genus diagram, and then apply the parity projection
once.

Every time we shall have a mapping which is well defined on the
classes of knots: Theorem \ref{lmkey} does not change the class of
the knot at all, and the group parity projection is well defined
once we know that we are on the minimal genus.

The resulting diagram will be classical. Denote it by $g(K)$.

The reader can easily find virtual knots (1-1 tangles) to be
inserted in Fig. \ref{trefblackbox}, so that for the resulting knot
$K$, the projection $g(K)$ gives the trefoil knot, whence the
projection $f(K)$ is the unknot.

For exact definitions of connected sums, see \cite{MyNewBook,KM}
\begin{cj}
The map $g$ takes connected sum of virtual knots to connected sums
of classical knots.
\end{cj}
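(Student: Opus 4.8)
The plan is to push everything onto minimal genus diagrams, where by construction the map $g$ is controlled by the parity group of Theorem \ref{thth}, and then to compare the parity group of a connected--sum diagram with those of its two factors. First I would fix minimal genus diagrams $D_{1}$ and $D_{2}$ of $K_{1}$ and $K_{2}$; by Kuperberg's theorem these are essentially canonical. Cutting each $D_{i}$ along an edge (away from crossings) and splicing produces a connected--sum diagram $D=D_{1}\# D_{2}$. The first substantive step is to check that $D$ is again of minimal genus, i.e. that the underlying genus is additive: the underlying genus of $K_{1}\# K_{2}$ should equal the sum of the underlying genera of $K_{1}$ and $K_{2}$. This ought to follow by inspecting the band presentation, where $S'(D)$ is the boundary connected sum of $S'(D_{1})$ and $S'(D_{2})$ along the splicing band, so that after capping one gets $S(D)=S(D_{1})\# S(D_{2})$ of genus $g_{1}+g_{2}$; since $D$ is built from minimal factors, additivity of minimal genus makes $D$ itself minimal.

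With $D$ minimal I would then compute $G(D)$ using Theorem \ref{thth}. From $S(D)=S(D_{1})\# S(D_{2})$ we get $H_{1}(S(D),\Z_{2})=H_{1}(S(D_{1}),\Z_{2})\oplus H_{1}(S(D_{2}),\Z_{2})$, and the class of the knot is $[K_{1}\# K_{2}]=([K_{1}],[K_{2}])$. A crossing $x$ inherited from $D_{1}$ has one half supported entirely in the $D_{1}$--summand, so its image in $H_{1}(S(D),\Z_{2})$ has the form $(a,0)$, while the complementary half carries the $D_{2}$--summand; symmetrically for crossings from $D_{2}$. The goal is to verify that $x$ survives the parity projection of $D$ exactly when it survives that of $D_{1}$. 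If this holds for all crossings, then the surviving subdiagram of $D$ is precisely the connected sum of the surviving subdiagrams of $D_{1}$ and $D_{2}$, which after one round of projection yields $g(K_{1}\# K_{2})=g(K_{1})\# g(K_{2})$, and the final iterations of Theorem \ref{lmkey} and the parity projection in the definition of $g$ should then be applied compatibly on both sides.

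The crux, and the reason this is only a conjecture, is that the survival condition does not transport cleanly. In $G(D_{1})=H_{1}(S(D_{1}),\Z_{2})/\langle[K_{1}]\rangle$ a crossing with half-class $a=[K_{1}]$ survives, whereas in $G(D)=\bigl(H_{1}(S(D_{1}),\Z_{2})\oplus H_{1}(S(D_{2}),\Z_{2})\bigr)/\langle([K_{1}],[K_{2}])\rangle$ the element $([K_{1}],0)$ is trivial only when $[K_{2}]=0$. Thus if \emph{both} factor knots are homologically nontrivial mod $2$ in their minimal surfaces---equivalently, not checkerboard colourable---the connected--sum projection can kill crossings that the factorwise projection keeps, so a priori $g(K_{1}\# K_{2})$ may carry strictly fewer classical crossings than $g(K_{1})\# g(K_{2})$. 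This is the main obstacle, and I would attack it in two stages. When the factors are checkerboard colourable the subgroup $\langle([K_{1}],[K_{2})]\rangle$ is trivial, Theorem \ref{thth} gives $G(D)=G(D_{1})\oplus G(D_{2})$, and the survival conditions match exactly, so the conjecture holds in this clean case. The remaining, genuinely hard case is the non--colourable one, where one must decide whether the iterated reductions reconcile the two sides or whether the identity must be corrected. A further preliminary subtlety is that connected sum of virtual knots is not well defined independently of the splicing points (see \cite{MyNewBook,KM}); the statement should be read as asserting that \emph{some} connected sum $g(K_{1})\# g(K_{2})$ is realized, with the cut points on $D_{1}$ and $D_{2}$ chosen compatibly.
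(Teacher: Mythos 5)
First, be aware that the paper does not prove this statement at all: it appears only as a Conjecture, so there is no proof of record to compare against, and your attempt must stand on its own. On its own terms it is not a proof, as you yourself concede. What you have done correctly is isolate the obstruction: on a minimal genus connected-sum diagram $D$, Theorem \ref{thth} gives $G(D)=\bigl(H_{1}(S_{1},\Z_{2})\oplus H_{1}(S_{2},\Z_{2})\bigr)/\langle([K_{1}],[K_{2}])\rangle$, and a crossing of $D_{1}$ whose half represents $[K_{1}]$ is even in $G(D_{1})$ but odd in $G(D)$ whenever both $[K_{1}]\neq 0$ and $[K_{2}]\neq 0$; so the factorwise comparison of survival conditions genuinely fails outside the checkerboard-colourable case, and your resolution of the colourable case (where the quotient is by the trivial subgroup and $G(D)=G(D_{1})\oplus G(D_{2})$) is sound as far as it goes. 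That is useful partial progress, but the case you leave open is exactly the content of the conjecture.

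Beyond that admitted gap, two steps you treat as routine are not. (i) You need the spliced diagram $D=D_{1}\# D_{2}$ of two minimal genus diagrams to itself be of minimal genus, since the parity-group projection inside the definition of $g$ is only well defined (as a map on knot classes) on minimal genus diagrams. Your band-presentation argument shows only that $D$ has underlying diagram genus $g_{1}+g_{2}$; it says nothing about whether some other diagram of $K_{1}\# K_{2}$ has smaller genus, which is precisely the nontrivial direction of additivity. The sentence ``since $D$ is built from minimal factors, additivity of minimal genus makes $D$ itself minimal'' assumes the additivity it is supposed to establish, and without it you cannot invoke Theorem \ref{thth} on $D$ at all. (ii) The map $g$ is an iteration: project, re-minimize via Theorem \ref{lmkey}, project again, until the diagram is classical. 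Even in the colourable case your argument compares only the first projection round; matching the subsequent re-minimizations and projections on the two sides of the desired identity $g(K_{1}\# K_{2})=g(K_{1})\# g(K_{2})$ requires an induction that you replace with the phrase ``applied compatibly on both sides.'' Your closing caveat about the ambiguity of connected sum (dependence on basepoints and diagrams) is a legitimate reading of the statement, but it does not repair either gap. In sum: the conjecture remains open after your proposal, though the obstruction you identify is the right place to attack it.
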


Of course, there are ways to mix the approaches described in the
present paper to construct further projections satisfying the
conditions of Theorem \ref{mainthm}.

An interesting question is to find ``the most careful'' projection
satisfying all conditions of Theorem \ref{mainthm} which preserves
more classical data.

\section{Problems with the existence of a well defined map on
diagrams}

Consider the virtual knot diagram $A$ drawn in the left picture of
Fig. \ref{vkclasproj}. If we seek a projection satisfying conditions
of the Main Theorem, we may $A$ project to $D$ in the same picture
(lower right). Note that $A$ is not classical. However, the two
intermediate knots ($B$ and $C$) are both classical: they are drawn
on the torus, however, they both fit into a cylinder, and hence, to
the plane; so, they will project to themselves.

There is no obvious reason why the projection of $A$ should be
exactly $D$ because both $B$ and $C$ are classical; on the other
hand there is no obvious way to make a preferred choice between $B$
and $C$ if one decides to take them to be the result of projection
of $A$.

So, a bigger diagram projects to a smaller one (we see that
$A>B,A>C$ but $B>D,C>D$). This is the lack of naturality which does
not allow one to make projection compatible with Reidemeister moves.
Of course $A$ differs from $B$ by one Reidemeister move, as well as
their images $D$ and $B$, but in the first case the move is
decreasing, and in the second case it is increasing.

This is also the reason of ambiguity: in fact, one can also project
$A$ to $B$ or $C$ since both these diagrams are classical.

\begin{figure}[htb]
     \begin{center}
     \begin{tabular}{c}
     \includegraphics[width=10cm]{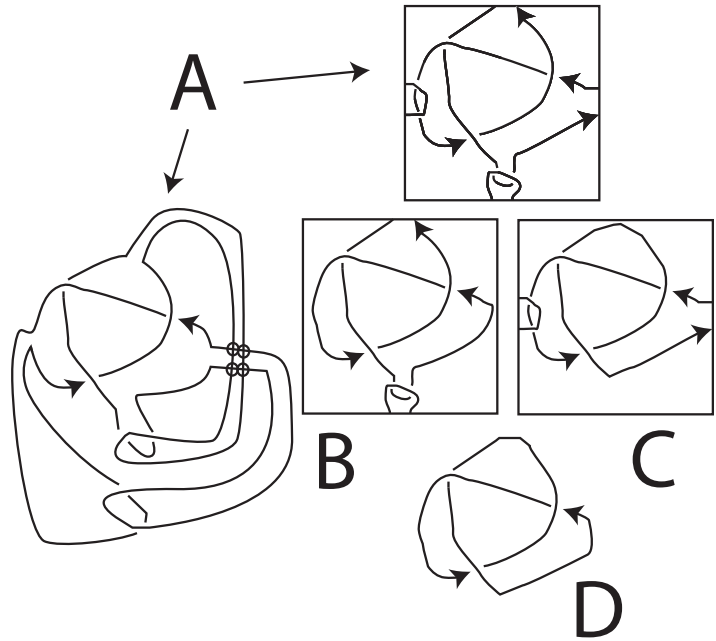}
     \end{tabular}
     \caption{Virtual Knot and Its Classical Projection}
     \label{vkclasproj}
\end{center}
\end{figure}

\end{document}